\def\EE{\mathbb E}
\def\cZ{\mathcal Z}
\def\EE{{\mathbb E}}
\def\({\left(}
\def\){\right)}  
\def\<{\langle}
\def\>{\rangle}
\let\epsilon\varepsilon
\let\phi \varphi
\newtheoremstyle{note}% name
  {4pt}%      Space above
  {4pt}%      Space below 
  {\sl}%      Body font
  {}%         Indent amount (empty = no indent, \parindent = para indent)
  {\bfseries}% Thm head font
  {.}%        Punctuation after thm head
  {.5em}%     Space after thm head: " " = normal interword space;
\newtheoremstyle{note2}% name
  {4pt}%      Space above
  {4pt}%      Space below 
  {\normalfont}%      Body font
  {}%         Indent amount (empty = no indent, \parindent = para indent)
  {\bfseries}% Thm head font
  {.}%        Punctuation after thm head
  {.5em}%     Space after thm head: " " = normal interword space;
\newtheoremstyle{introthms}% name
  {3pt}%      Space above, empty = `usual value'
  {3pt}%      Space below
  {\normalfont}% Body font
  {}%         Indent amount (empty = no indent, \parindent = para indent)
  {\slshape}% Thm head font
  {.}%        Punctuation after thm head
  {.5em}%     Space after thm head: " " = normal interword space;
\newtheoremstyle{cases}% name
  {2pt}%      Space above, empty = `usual value'
  {2pt}%      Space below
  {\normalfont}% Body font
  {}%         Indent amount (empty = no indent, \parindent = para indent)
  {\slshape}% Thm head font
  {.}%        Punctuation after thm head
  {.3em}%     Space after thm head: " " = normal interword space;
\theoremstyle{plain}
\newtheorem{theorem}              {Theorem}       [section]
\newtheorem{lemma}      [theorem] {Lemma}         %[section]
\newtheorem{corollary}  [theorem] {Corollary}     %[section]
\newtheorem{fact}       [theorem] {Fact}          %[section]
\theoremstyle{cases}
\theoremstyle{introthms}
\theoremstyle{note} 
\newtheorem{remark}    [theorem]   {Remark}       
\newtheorem{definition}[theorem]   {Definition}
\newtheorem{property}[theorem] {Property} 
\theoremstyle{note2}
\newtheorem*{acknowledge} {Acknowledgement}
\newcommand{\old}[1]{}
\newcommand*\patchAmsMathEnvironmentForLineno[1]{%
\expandafter\let\csname old#1\expandafter\endcsname\csname #1\endcsname
\expandafter\let\csname oldend#1\expandafter\endcsname\csname end#1\endcsname
\renewenvironment{#1}%
{\linenomath\csname old#1\endcsname}%
{\csname oldend#1\endcsname\endlinenomath}}% 
\newcommand*\patchBothAmsMathEnvironmentsForLineno[1]{%
\patchAmsMathEnvironmentForLineno{#1}%
\patchAmsMathEnvironmentForLineno{#1*}}%
\title[Sidon sets]{%
  On Sidon sets in a random set of vectors}
\author[S.~J.~Lee]{Sang June Lee}
\address{Department of Mathematics, Duksung Women's University, South Korea} 
\email{sanglee242@duksung.ac.kr}
\thanks{
 E-mail: sanglee242@duksung.ac.kr \\
 The author was supported by Basic Science Research Program through the National Research Foundation of Korea (NRF) funded by the Ministry of Science, ICT \& Future Planning (2013R1A1A1059913).}
\date{\today, \currenttime}
\begin{document}
%\onehalfspace
%\linenumbers
\singlespace
\pagestyle{plain}
\thispagestyle{empty}
\footskip=30pt

\shortdate
\settimeformat{ampmtime}

\begin{abstract}For positive integers $d$ and $n$, let $[n]^d$ be the set of all vectors $(a_1,a_2,\dots, a_d)$, where $a_i$ is an integer with $0\leq a_i\leq n-1$. A subset $S$ of $[n]^d$ is called a \emph{Sidon set} if all sums of two (not necessarily distinct) vectors in $S$ are distinct.

In this paper, we estimate two numbers related to the maximum size of Sidon sets in $[n]^d$. First, let $\cZ_{n,d}$ be the number of all Sidon sets in $[n]^d$.
We show that $\log (\cZ_{n,d})=\Theta(n^{d/2})$, where the constants of $\Theta$ depend only on $d$. Next, we estimate the maximum size of Sidon sets contained in a random set $[n]^d_p$, where $[n]^d_p$ denotes a random set obtained from $[n]^d$ by choosing each element independently with probability $p$.

\end{abstract}

\maketitle
\onehalfspace

\section{Introduction}

For positive integers $d$ and $n$, let $[n]^d$ be the set of all vectors $(a_1,a_2,\dots, a_d)$, where $a_i$'s are integers with $0\leq a_i\leq n-1$. A subset $S$ of $[n]^d$ is called a \emph{Sidon set} if all sums of two (not necessarily distinct) vectors in $S$ are distinct. 
A well-known problem on Sidon sets in $[n]^d$ is the determination of the maximum size $F([n]^d)$ of Sidon sets in $[n]^d$. For $d=1$, Erd\H{o}s and Tur\'an~\cite{erdoes41:_probl_of_sidon} showed in 1941  that $F([n])\leq n^{1/2}+O(n^{1/4})$. Then, Lindstr\"om~\cite{Lindstrom1969}, in 1969, improved the bound to $F([n])\leq n^{1/2}+n^{1/4}+1$. On the other hand, in 1944, Chowla~\cite{chowla44:_solut_of_probl_of_erdoes} and Erd\H{o}s~\cite{erdoes44:_probl_of_sidon}  observed that a result of Singer~\cite{Si} implies that $F([n])\geq n^{1/2}-O(n^{5/16})$. Consequently, we know $F([n])=n^{1/2}(1+o(1))$. For a general $d\geq 1$, Lindstr\"om~\cite{Lindstrom1972} showed in 1972 that $F([n]^d)\leq n^{d/2}+O(n^{d^2/(2d+2)})$. On the other hand, in 2010, Cilleruelo~\cite{Cilleruelo2010} proved that $F([n]^d) \geq F([n^d])\geq n^{d/2}-O(n^{5d/16}).$ Therefore,
\begin{equation}\label{eq:F}F([n]^d)=n^{d/2}(1+o(1)).\end{equation} For more information, see the classical monograph of Halberstam and Roth~\cite{HalberstamRoth2ndEd} and a survey paper by O'Bryant~\cite{obryant04:_biblio_Sidon}.

In this paper we consider two numbers related to the number $F([n]^d)$. The first one is  the number $\cZ_{n,d}$ of all Sidon sets contained in $[n]^d$. The second one is the maximum size of Sidon sets contained in a random subset of $[n]^d$ instead of $[n]^d$.

We first start with the problem of estimating $\cZ_{n,d}$. Recalling that $F([n]^d)=n^{d/2}(1+o(1))$, one can easily see that
$$2^{F([n]^d)}\leq \cZ_{n,d}\leq \sum_{k=1}^{F([n]^d)} \binom{n^d}{k}\leq F([n]^d)\binom{n^d}{F([n]^d)}.$$
This implies the following.
\begin{fact}\label{fact:Z}
$$ 2^{n^{d/2}(1+o(1))}\leq \cZ_{n,d}\leq n^{(d/2)n^{d/2}(1+o(1))}.$$
\end{fact}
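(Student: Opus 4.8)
The statement is an immediate consequence of the chain of inequalities displayed just above it together with the asymptotics \eqref{eq:F}, so the plan is simply to carry out the two substitutions, paying attention only to the constant in the exponent of the upper bound.

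For the lower bound, note that every subset of a fixed maximum Sidon set $S_0\subseteq[n]^d$ is again a Sidon set, and there are $2^{|S_0|}=2^{F([n]^d)}$ such subsets; this gives $2^{F([n]^d)}\le\cZ_{n,d}$. Substituting $F([n]^d)=n^{d/2}(1+o(1))$ from \eqref{eq:F} yields $\cZ_{n,d}\ge 2^{n^{d/2}(1+o(1))}$ at once.

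For the upper bound, first observe that any Sidon set in $[n]^d$ has at most $F([n]^d)$ elements, so $\cZ_{n,d}\le\sum_{k=0}^{F([n]^d)}\binom{n^d}{k}$. Since $F([n]^d)=n^{d/2}(1+o(1))$ lies far below $n^d/2$ for large $n$, the summands are increasing in $k$ over this range, so the sum is at most $\bigl(F([n]^d)+1\bigr)\binom{n^d}{F([n]^d)}$. The one point that needs care is that the crude estimate $\binom{n^d}{k}\le (n^d)^k$ only gives $\cZ_{n,d}\le n^{d\,n^{d/2}(1+o(1))}$, which has the wrong leading constant; instead one should use $\binom{N}{k}\le(eN/k)^k$ with $N=n^d$ and $k=F([n]^d)=n^{d/2}(1+o(1))$, so that $eN/k=e\,n^{d/2}(1+o(1))$ and hence $\binom{n^d}{F([n]^d)}\le\bigl(e\,n^{d/2}(1+o(1))\bigr)^{F([n]^d)}$. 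Taking logarithms in base $n$, the exponent is $F([n]^d)\cdot\bigl(\tfrac d2+o(1)\bigr)=\tfrac d2\,n^{d/2}(1+o(1))$, while the extra factor $F([n]^d)+1$ contributes only $n^{o(n^{d/2})}$, which is absorbed into the $o(1)$. This gives $\cZ_{n,d}\le n^{(d/2)n^{d/2}(1+o(1))}$, as claimed.

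There is no genuine obstacle here; the argument is elementary, and the only thing to watch is the choice of the binomial estimate $\binom{N}{k}\le(eN/k)^k$, which is what produces the constant $d/2$ rather than $d$ in the upper exponent.
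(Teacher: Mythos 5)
Your argument is correct and follows the same route the paper indicates: the lower bound by counting subsets of a maximum Sidon set, the upper bound by summing $\binom{n^d}{k}$ up to $k=F([n]^d)$ and invoking $F([n]^d)=n^{d/2}(1+o(1))$ from~\eqref{eq:F}. Your remark that one must use $\binom{N}{k}\le(eN/k)^k$ rather than $\binom{N}{k}\le N^k$ to obtain the constant $d/2$ in the exponent is a correct and worthwhile detail that the paper leaves implicit.
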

In this paper, we improve the above upper bound as follows.
\begin{theorem}\label{thm:main1} For a positive integer $d$, there exists a positive constant $c=c(d)$ such that, for any sufficiently large $n=n(d)$,
$$\cZ_{n,d}\leq 2^{cn^{d/2}}.$$
\end{theorem}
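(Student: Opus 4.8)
The plan is to bound $\cZ_{n,d}$ by a clever encoding (container-type) argument. The key structural fact about Sidon sets we will exploit is that they are ``self-limiting'': if $S$ is a Sidon set and $x,y,z\in S$ are already chosen, then the fourth point $w$ of any additive quadruple $x+w=y+z$ is forbidden, so each pair of already-chosen differences kills a candidate. More usefully, if $A\subseteq S$ has already been revealed, then for \emph{every} vector $v$, the number of representations $v = s_1 - s_2$ with $s_1,s_2\in S$ is at most $1$; hence once a linear-size piece of $S$ has been exposed, only very few vectors of $[n]^d$ may still be added. Concretely I would proceed as follows.

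\textbf{Step 1 (A greedy/online exposure scheme).} Fix an arbitrary linear order on $[n]^d$. Given a Sidon set $S$, reveal its elements $s_1<s_2<\cdots<s_k$ one at a time. After revealing $s_1,\dots,s_i$, call a vector $v\in[n]^d$ \emph{available} if $v>s_i$ and $S':=\{s_1,\dots,s_i,v\}$ is still Sidon, i.e.\ $v$ creates no repeated sum with the already-revealed part. The point $s_{i+1}$ must be available. The number of available vectors after $i$ steps is at most $n^d$ trivially, but we will show it drops; the subtlety is that it need not drop for small $i$, so we instead bound the \emph{total} information.

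\textbf{Step 2 (Splitting the Sidon set; the main idea).} Since $k=|S|\le F([n]^d)=O(n^{d/2})$, write $S=S_0\cup S_1$ where $S_0$ consists of the first $m:=\lceil \epsilon n^{d/2}\rceil$ elements (or all of $S$ if $|S|<m$) and $S_1$ the rest. To encode $S_0$ we just pay $\binom{n^d}{m}\le 2^{O(m\log n)}$ — this is too expensive, so this naive split fails and we need the real idea: iterate a \emph{halving}. Bound the number of Sidon sets of size exactly $k$ by building a ``certificate'' consisting of roughly $O(\log n)$ rounds; in round $j$ we reveal a set $R_j$ of $O(n^{d/2}/2^{j})$ new elements, but crucially the ground set from which $R_j$ is chosen has already shrunk to size $O(n^{d}/4^{j})$ because the $\Theta(n^{d/2}/2^{j-1})$ differences realized by $R_1\cup\cdots\cup R_{j-1}$ each forbid, via the Sidon condition, a constant fraction of remaining vectors on average. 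Hence the cost of round $j$ is $\binom{O(n^d/4^j)}{O(n^{d/2}/2^j)}\le 2^{O((n^{d/2}/2^j)\cdot \log(O(2^j)))}= 2^{O((n^{d/2}/2^j)(j+O(1)))}$, and summing the geometric-with-linear-factor series over $j=1,\dots,O(\log n)$ gives a total of $2^{O(n^{d/2})}$, as desired.

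\textbf{Step 3 (Making the shrinkage precise).} The heart of the argument — and the step I expect to be the main obstacle — is verifying that after revealing a set $T$ of $t=\Theta(n^{d/2})$ elements of a Sidon set, the number of vectors of $[n]^d$ that can still be added is $O(n^d/t^2\cdot t) = O(n^d \cdot \text{(something }\ll 1\text{)})$ — more precisely, we need that a typical revealed prefix forbids a constant fraction of the ground set. For this one counts: a vector $v$ is addable to $T\cup\{v\}$ (Sidon) only if $v$ avoids all values $t_1+t_2-t_3$ with $t_i\in T$ and all values $2t_1-t_2$; there are at most $t^3$ such forbidden values, which for $t=\Theta(n^{d/2})$ is $\Theta(n^{3d/2})\gg n^d$, so this crude bound is useless and instead one must use that the forbidden set, being a sumset-difference of a Sidon set, has \emph{few collisions} — in fact $|T+T|=\binom{t+1}{2}$ exactly, and one shows $|(T+T)-T|$ covers, after the first $\Theta(\sqrt{n^d})$ elements are revealed, all but an $o(1)$-fraction of $[n]^d$, using a second-moment / pigeonhole count on how many vectors lie in few difference classes. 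Carrying this out carefully, with the right threshold at which the ground set has halved, and then iterating the halving $O(\log n)$ times while tracking the dependence of all constants on $d$ (but not on $n$), yields the bound $\cZ_{n,d}\le 2^{cn^{d/2}}$ with $c=c(d)$.
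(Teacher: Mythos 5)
Your overall architecture --- seed the count with a short prefix, then iterate a doubling/halving step whose cost at scale $n^{d/2}/2^{j}$ is $2^{O((n^{d/2}/2^{j})(j+O(1)))}$, and sum the series to get $2^{O(n^{d/2})}$ --- is exactly the shape of the paper's proof (Lemma~\ref{lem:Z_t}, built from Lemma~\ref{lem:Z_s+q+r}). The gap is in your Step 3, and it is fatal as stated: it is \emph{not} true that after revealing a portion $T$ of a Sidon set with $|T|=\Theta(n^{d/2})$, all but an $o(1)$-fraction of $[n]^d$ is forbidden. Take $d=1$ and let $T$ be a maximal Sidon set inside the multiples of $4$ in $[n]$, so $|T|\geq (1/2-o(1))\sqrt{n}$. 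Every element of $T+T-T$ and of $\tfrac12(T+T)$ is even, so $T\cup\{v\}$ is Sidon for \emph{every} odd $v$: half of the ground set remains individually addable no matter which such $T$ is revealed. Consequently the ``available'' set in your rounds need not shrink at all, the round cost is $\binom{\Theta(n^d)}{\Theta(n^{d/2}/2^j)}=2^{\Theta((n^{d/2}/2^j)\log n)}$, and already at $j=O(1)$ this exceeds $2^{O(n^{d/2})}$. Your proposed second-moment count cannot rescue this: it shows at best $|T+T-T|\geq(1-o(1))|T|^2$, which (for non-maximal $T$, and because $T+T-T$ lives in a dilated box) leaves a constant fraction of $[n]^d$ uncovered.

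What makes the argument work --- and what the paper does --- is to pass from single forbidden vertices to forbidden \emph{pairs}: for a Sidon seed $S$ of size $s$, the collision graph $G_S$ on $[n]^d\setminus S$ (with $\{v_1,v_2\}$ an edge when $v_1+b_1=v_2+b_2$ for some $b_1,b_2\in S$) is locally dense, i.e.\ every $U$ with $|U|\geq 2^{d+1}n^d/s$ spans at least $\frac{s^2}{2^{d+1}n^d}\binom{|U|}{2}$ edges (Lemma~\ref{lem:G_S}, proved by a no-$C_4$ plus convexity count). A Kleitman--Winston-type lemma (Lemma~\ref{lem:locally dense}) then bounds the number of independent sets of size $q+r$ in such a graph by $\binom{N}{q}\binom{R}{r}$ with $R=2^{d+1}n^d/s$, even though most individual vertices may remain available; in the multiples-of-$4$ example the odd numbers are all available but span a dense collision graph, which is precisely why one must count independent sets rather than available vertices. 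With that lemma your bookkeeping goes through, modulo two further corrections: your rounds are ordered backwards (the large reveals must come \emph{last}, when $s$ is large and $R=O(n^d/s)$ is small --- as written, round $1$ chooses $n^{d/2}/2$ elements from the full ground set, which is the ``naive split'' you yourself rejected), and the claimed ground-set size $O(n^d/4^j)$ is inconsistent with your own binomial estimate, which requires $O(n^{d/2}2^{j})$. Finally, the iteration must bottom out at a seed of size $\Theta(n^{d/3}(\log n)^{1/3})$, paid for by the trivial bound $n^{O(n^{d/3}(\log n)^{1/3})}=2^{o(n^{d/2})}$, as in the paper's treatment of all $t\leq n^{d/3}\log n$.
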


Note that this upper bound matches the lower bound in Fact~\ref{fact:Z} up to a multiplicative constant factor in the exponent. Our proof of Theorem~\ref{thm:main1} will be provided in Subsection~\ref{sec:number of Sidon sets}. The case $d=1$ of Thereom~\ref{thm:main1} was also proved in~\cite{sidon}.

Next, we deal with the maximum size of Sidon sets contained in a random subset of $[n]^d$. Let $[n]^d_p$ be a random set obtained from $[n]^d$ by choosing each element independently with probability $p$. Let $F([n]^d_p)$ be the maximum size of Sidon sets in a random set $[n]^d_p$. Our result about $F([n]^d_p)$ is as follows.

\begin{theorem}\label{thm:main2}
  For a positive integer $d$, let $a$ be a constant with $-d< a\leq 0$, and let $p=p(n)=n^a(1+o(1))$.  Then, there exists a constant $b=b(a)$ such
  that, asymptotically almost surely (a.a.s.), that is, with probability tending to $1$ as $n\rightarrow \infty$,
  \begin{equation}
    \label{eq:b_def}
    F([n]^d_p)=n^{b+o(1)}.
  \end{equation}
  Moreover,
  \begin{equation}
    \label{eq:b(a)}
    b(a)=
    \begin{cases}
     a+d    &\text{if }\,\,-d< a\leq -2d/3,\\
      d/3  &\text{if }\,\,-2d/3 \leq a\leq -d/3,\\
      (a+d)/2  &\text{if }\,\,-d/3\leq a\leq 0.
    \end{cases}
  \end{equation}
\end{theorem}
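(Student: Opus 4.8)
The plan is to prove matching upper and lower bounds for $F([n]^d_p)$ in each of the three ranges of $a$, following the now-standard strategy for ``Sidon sets in random sets'' results (as in the work of Kohayakawa--Lee--Rödl--Samotij on the $d=1$ case).

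First I would establish the \emph{upper bound}. Fix $\epsilon>0$ and set $t=n^{b(a)+\epsilon}$. We want to show that a.a.s. $[n]^d_p$ contains no Sidon set of size $t$. The first-moment computation is: the expected number of Sidon subsets of $[n]^d_p$ of size $t$ is $g(t)\cdot p^t$, where $g(t)$ is the number of Sidon sets of size $t$ in $[n]^d$. The key input here is a bound on $g(t)$; one extracts from (the proof of) Theorem~\ref{thm:main1}, or from a direct counting argument, an estimate of the shape $g(t)\le \binom{n^{d/2+o(1)}}{t}$ or more precisely $g(t)\le \big(Cn^d/t^2\big)^t\cdot(\text{lower order})$ reflecting that once $t$ elements of a Sidon set are partially chosen, the number of ``free'' positions for the next element is roughly $n^d/t^2$ rather than $n^d$ (each pair of chosen elements kills about $t$ candidates, since differences must be distinct). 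Plugging $p=n^{a+o(1)}$ and optimizing, $g(t)p^t\to 0$ precisely when $t\ge n^{b(a)+\epsilon}$; the three regimes in \eqref{eq:b(a)} emerge from which term dominates ($p^t$ versus the two competing factors in $g(t)$), exactly the three-way case split one sees for $d=1$ scaled by $d$.

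Next comes the \emph{lower bound}, which I expect to be the main obstacle. Here one needs, a.a.s., an actual Sidon set of size $n^{b(a)-\epsilon}$ inside $[n]^d_p$. The natural approach is the deletion method combined with a careful second-moment / concentration argument: start from $R=[n]^d_p$, which a.a.s.\ has size $\sim pn^d$; count the number $Y$ of ``bad configurations'' (solutions to $x+y=z+w$ with $x,y,z,w\in R$ not forming a trivial solution) and show $Y$ is a.a.s.\ not much larger than its expectation $\sim p^4 n^{2d}$ (this requires controlling the number of such additive quadruples in $[n]^d$, which is $\Theta(n^{3d})$, and a concentration statement --- a martingale/Janson-type bound --- since $Y$ is not a sum of independent variables); then delete one vector from each bad configuration to obtain a Sidon set of size $\ge |R|-Y \gtrsim pn^d - p^4n^{2d}$. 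Optimizing $p n^d$ against $p^4 n^{2d}$ gives the deletion bound, which is sharp only in the regime $-d<a\le -2d/3$ (yielding $b=a+d$) and in part of the middle range. For $a$ closer to $0$ --- where the truth is $b=(a+d)/2$, i.e.\ we should find Sidon sets almost as large as the \emph{absolute} maximum $F([n]^d)=n^{d/2+o(1)}$ restricted to a sparse random set --- plain deletion loses, and one must instead use a hypergraph-container or a more delicate alteration argument: embed a near-optimal Sidon set (e.g.\ a Singer/Bose-type construction via \eqref{eq:F}) and argue that a random sample of density $p$ retains $\sim p\cdot n^{d/2}$ of its elements while the induced bad configurations are sparse enough to delete, or run a greedy/nibble argument on the ``Sidon hypergraph''. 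The constant $d/3$ in the middle range is the crossover value where these two strategies meet.

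Finally I would assemble the pieces: verify that the upper-bound exponent and the lower-bound exponent agree (up to $o(1)$) in each of the three intervals, check the boundary cases $a=-2d/3$ and $a=-d/3$ are consistent between adjacent formulas (they are, by continuity of $b(a)$), and confirm the $p=p(n)=n^a(1+o(1))$ hypothesis is used only through $p=n^{a\pm\epsilon}$ bounds so the $(1+o(1))$ factor is harmless. The technical heart, and where I'd budget most of the effort, is the concentration of the count of additive quadruples in the lower-bound argument for the dense-ish regime and the container/nibble step needed to reach $b=(a+d)/2$; the rest is first-moment bookkeeping driven by the counting bound on Sidon sets already available from Theorem~\ref{thm:main1}.
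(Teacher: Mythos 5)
Your upper-bound outline matches the paper: a first-moment computation $p^t\cZ_{n,d}(t)\to 0$ driven by counting bounds of the shape $\cZ_{n,d}(t)\le (Cn^d/t^2)^t\cdot(\text{lower order})$, which is exactly Lemmas~\ref{lem:Z_t} and~\ref{lem:Z_s+q+r(2)}. The divergence, and the gap, is in the lower bound. The paper does essentially no probabilistic work in $d$ dimensions: it invokes Cilleruelo's bijection $\phi_d\colon[n^d]\to[n]^d$, which sends Sidon sets to Sidon sets and identifies $[n^d]_p$ with $[n]^d_p$ in distribution, so the known one-dimensional lower bounds for $F([n^d]_p)$ from \cite{sidon,kohayakawa11*} transfer verbatim (Lemma~\ref{lem:lower bound}). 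You instead propose to rebuild the lower bound directly in $[n]^d$, and your sketch does not close in the dense regime $-d/3\le a<0$, which is the only genuinely hard case: sampling a fixed maximum Sidon set $S_0$ with $|S_0|\sim n^{d/2}$ at density $p$ retains only $\sim pn^{d/2}=n^{a+d/2}$ elements, which is $n^{a/2}$ times smaller than the target $n^{(a+d)/2}$; the deletion method also caps out at $n^{d/3}$; and ``hypergraph containers or a nibble'' is named but not an argument (containers give upper bounds, and no nibble construction is specified). So the claim $b=(a+d)/2$ for $-d/3\le a\le 0$ is not established by your proposal. (For the record, the middle range $b=d/3$ does follow from deletion at $p=2n^{-2d/3}$ plus monotonicity of $F([n]^d_p)$ in $p$, so that part is fine at the exponent level.)

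Two smaller points. First, your deletion bookkeeping is internally inconsistent: with $\Theta(n^{3d})$ additive quadruples in $[n]^d$ the expected count in $R$ is $\Theta(p^4n^{3d})$, not $p^4n^{2d}$; balancing $pn^d$ against $p^4n^{3d}$ gives the correct threshold $p=n^{-2d/3}$, whereas your written exponent would put it at $n^{-d/3}$. Second, even where your route could be made to work, it amounts to reproving the $d=1$ lower bounds of \cite{sidon} in higher dimensions; the bijection $\phi_d$ makes all of that unnecessary, and recognizing such a reduction is the main content of Section~\ref{sec:lower bounds}.
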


\begin{figure}
\begin{center}
\includegraphics[scale=0.2]{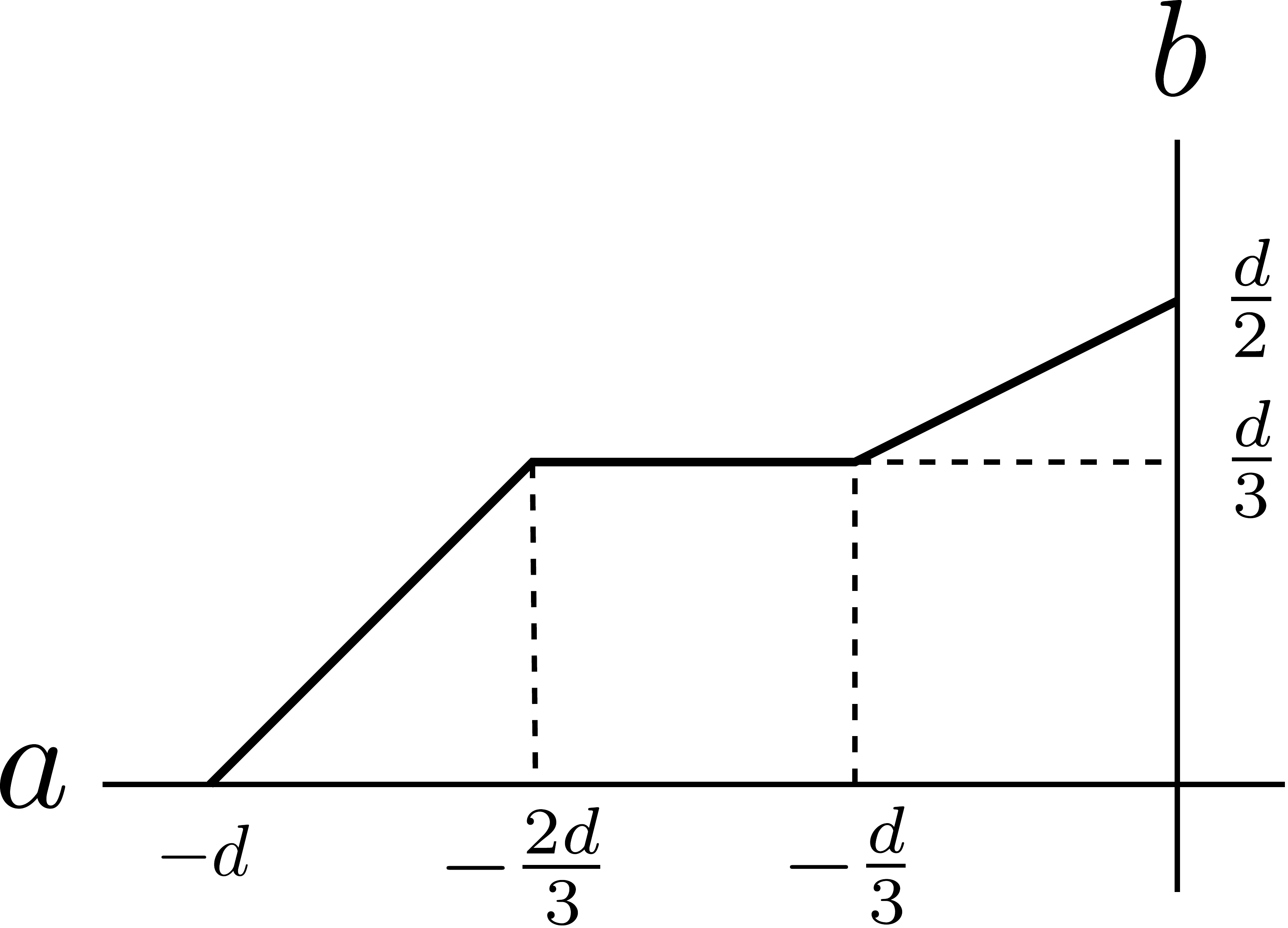}
\end{center}
\caption{The graph of $b=b(a)$ in Theorem~\ref{thm:main2}}
\label{fig:graph}
\end{figure}

The graph of $b=b(a)$ is given in Figure~\ref{fig:graph}. A refined version of Theorem~\ref{thm:main2} is stated in Theorems~\ref{thm:small}--\ref{thm:large} in Subsection~\ref{sec:random}. Theorems~\ref{thm:small}--\ref{thm:large} will be proved in Sections~\ref{sec:upper bounds} and~\ref{sec:lower bounds}.
The case $d=1$ of Theorem~\ref{thm:main2} was also proved in~\cite{sidon}. 

\subsection{Remark and Notation}

From now on, let $d$ be a fixed positive integer. Constants in $O, \Omega$, and $\Theta$ may depend on $d$.  We write $f=o(g)$ if $f/g$ goes to $0$ as $n\rightarrow \infty$. We also write $f\ll g$ if $f/g=o(1)$.

\section{Main Results}\label{sec:main results}

\subsection{The number of Sidon sets of a given size}

We will obtain an upper bound on the number of Sidon sets in $[n]^d$ of a given size. For a positive integer $t$, let $\cZ_{n,d}(t)$ be the number of Sidon sets in $[n]^d$ of size $t$.  Observe that the following result applies when $t=\Omega\(n^{d/3}(\log n)^{1/3}\)$.

\begin{lemma}\label{lem:Z_t} Let $d$ be a positive integer. For a sufficiently large integer $n=n(d)$, the following holds:
If  $t$ is a positive integer with $t\geq 2s_0$, where $s_0=(d2^{d+1})^{1/3} n^{d/3} (\log n)^{1/3}$, then 
\begin{equation*}
\cZ_{n,d}(t)\leq n^{2(d+1)s_0}\({e2^{d+5}n^d \over t^2}\)^{t}.
\end{equation*}
\end{lemma}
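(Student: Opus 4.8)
The plan is to bound $\cZ_{n,d}(t)$ by a container-type argument: show that every Sidon set $S$ of size $t$ is ``fingerprinted'' by a small subset together with few free choices, so that the number of such $S$ is controlled. Concretely, I would build a fingerprint of size about $s_0$ inside $S$ such that, once the fingerprint is fixed, the remaining $t-|\text{fingerprint}|$ elements of $S$ must lie in a set of size $O(n^d/t^2)$ per element (this is where the Sidon property does the work). The first factor $n^{2(d+1)s_0}$ then counts the ways to choose the fingerprint (roughly $(n^d)^{s_0}\le n^{2(d+1)s_0}$ with room to spare, since a vector in $[n]^d$ is specified by $d$ coordinates each below $n$), and the second factor $\bigl(e2^{d+5}n^d/t^2\bigr)^t$ (up to absorbing the binomial $\binom{N}{t}\le (eN/t)^t$) counts the ways to extend the fingerprint to all of $S$.

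The key mechanism for the localization step is the standard ``many representations force small forbidden sets'' phenomenon for Sidon (i.e., $B_2$) sets. Fix a subset $T\subseteq S$. For any vector $v\in[n]^d$, the number of ways to write $v = x + y$ with $x,y\in S$ is at most the number of such representations with $x\in T$ plus... — more usefully, consider the difference set: if $S$ is Sidon then all differences $x-y$ ($x,y\in S$, $x\ne y$) are distinct, so $S$ behaves like a perfect difference set. I would use this to argue that, after fixing a well-chosen $T$ of size $\approx s_0$, every further element $z\in S\setminus T$ is determined up to $O(n^d/t^2)$ choices: intuitively, the $\binom{t}{2}\asymp t^2$ distinct differences coming from $S$ occupy a $t^2$-fraction of the ``difference space'' of size $O(n^d)$, and any candidate $z$ must avoid creating a collision with a difference already present among $T$; choosing $T$ greedily (or randomly) so that its difference set is spread out forces the number of admissible $z$ down to roughly $n^d/t^2$. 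The condition $t\ge 2s_0$ is exactly what is needed for this counting to be meaningful — the fingerprint is a genuinely smaller part of $S$ — and it is why the lemma is only claimed in that range, matching the remark that it applies for $t=\Omega(n^{d/3}(\log n)^{1/3})$ since $s_0 = (d2^{d+1})^{1/3}n^{d/3}(\log n)^{1/3}$.

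The main obstacle I anticipate is making the greedy/random choice of the fingerprint $T$ precise so that the per-element extension bound $O(n^d/t^2)$ holds \emph{simultaneously} for all of $S\setminus T$, not just on average. One clean way to handle this: process the elements of $S$ in some order and, for each new element, record it into the fingerprint $T$ only if it has ``too many'' bad interactions (close differences) with the current set; a double-counting argument bounds $|T|$ by $s_0$ using the logarithmic term and the factor $d2^{d+1}$ (the $2^{d+1}$ presumably coming from an $\ell_\infty$-ball volume bound in $\ZZ^d$, and the $\log n$ from a union bound over the $\le n^d$ possible target vectors). After that, the elements not placed in $T$ each lie in a set of size at most $e2^{d+5}n^d/t^2$ — the constant $2^{d+5}$ absorbing the $2^{d+1}$ from the ball bound, a factor from switching between difference-set and sumset formulations, and the $e$ from the $\binom{N}{t}\le(eN/t)^t$ estimate — giving the stated bound
\[
\cZ_{n,d}(t)\;\le\; n^{2(d+1)s_0}\(\frac{e2^{d+5}n^d}{t^2}\)^{t}.
\]
The rest is routine bookkeeping of constants, which I would not grind through here.
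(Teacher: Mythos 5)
Your high-level philosophy (a fingerprint/container argument driven by supersaturation from the Sidon property) is the same as the paper's, but the quantitative heart of your plan does not work as stated, and the missing idea is precisely the one the paper's proof is built around. The issue is this: the localization of a new element $z$ is governed by a constraint graph built from an \emph{already-fixed} Sidon subset. If that subset (your fingerprint $T$) has size $s$, the relevant graph $G_T$ on $[n]^d\setminus T$ (edges $\{v_1,v_2\}$ with $v_1+b_1=v_2+b_2$, $b_1,b_2\in T$) has local edge density only $\beta = s^2/(2^{d+1}n^d)$, so the Kleitman--Winston mechanism shrinks the candidate set for the remaining elements only down to size about $2^{d+1}n^d/s$, not $O(n^d/t^2)$. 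With $|T|\approx s_0 = \Theta(n^{d/3}(\log n)^{1/3})$ and $t$ as large as $n^{d/2}$, this gives a per-element count of order $n^{2d/3}$, whereas $n^d/t^2$ can be $O(1)$. Your heuristic that ``the $\binom{t}{2}$ differences of $S$ occupy a $t^2$-fraction of the difference space'' uses the differences of all of $S$, but the only differences you are allowed to condition on are those of the fingerprint you have already paid for; a single fingerprint of size $s_0$ simply does not carry $t^2$ worth of constraints. Carried out honestly, your one-shot scheme yields roughly $(en^d/(s_0 t))^{t}$, which is strictly weaker than the claimed $(e2^{d+5}n^d/t^2)^t$ once $t\gg s_0$.

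The paper closes this gap by iterating the seed-and-extend step along a doubling sequence $s_0 \le s_1 < 2s_1 < 4s_1 < \dots < t$: at stage $k$ the seed is a Sidon set of size $s_k$ (already counted at the previous stage), the locally-dense lemma applied to $G_{S}$ with $\beta = s_k^2/(2^{d+1}n^d)$ confines the $s_{k+1}-s_k = s_k$ new elements to a container of size $2^{d+1}n^d/s_k$, and the product $\prod_k \bigl(e2^{d+1}n^d/s_k^2\bigr)^{s_k}$ telescopes to $\bigl(e2^{d+5}n^d/t^2\bigr)^{t}$; the fingerprint costs $q_k = s_0/4^{k-1}$ sum to $O(s_0)$ and are absorbed into the factor $n^{2(d+1)s_0}$. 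If you want a genuinely one-shot argument you would need a different tool (e.g.\ hypergraph containers on the Sidon quadruples), which you do not develop. As written, the proposal has a real gap at its central step.
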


Our proof of Lemma~\ref{lem:Z_t} will be given in Subsection~\ref{sec:large}. %Note that the case $d=1$ of Lemma~\ref{lem:Z_t} was given in Theorem 2.1 in~\cite{sidon}.
Lemma~\ref{lem:Z_t} will be used in order to prove Theorem~\ref{thm:main1} (see Subsection~\ref{sec:number of Sidon sets} for its proof) and the upper bounds in Theorems~\ref{thm:middle_large} and~\ref{thm:large} (see Section~\ref{sec:upper bounds} for its proof). 

The next lemma provides an upper bound on the number $\cZ_{n,d}(t)$ for $t=\Omega(n^{d/3})$.
Observe that the range of $t$ here is a bit wider than the range of $t$ in Lemma~\ref{lem:Z_t}. 

\begin{lemma}\label{lem:Z_s+q+r(2)} Let $\gamma$ and $\omega$ be real numbers and let $n$, $s^*$ and $t$  be positive integers satisfying that
\begin{equation}\label{eq:q(2)} 0<\gamma<s^*/2^{d+1},  \hskip 1em s^*=  2^{(d+1)/3}n^{d/3} (\log \gamma)^{1/3}, \end{equation}
\begin{equation}\label{eq:q(3)}  \omega\geq 4, \hskip 2em \mbox{ and } \hskip 2em t=\omega s^*. \end{equation}
Then, 
\begin{equation*} \cZ_{n,d}(t)\leq \(\frac{4en^d}{t\gamma^{1-2/\omega}}\)^t. \end{equation*}
\end{lemma}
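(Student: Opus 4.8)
The plan is to reduce this lemma to Lemma~\ref{lem:Z_t} by a change of parameters. The key observation is that Lemma~\ref{lem:Z_t} already gives a bound of the form $n^{2(d+1)s_0}\left(e2^{d+5}n^d/t^2\right)^t$ whenever $t\geq 2s_0$ with $s_0=(d2^{d+1})^{1/3}n^{d/3}(\log n)^{1/3}$; here the factor $\log n$ is wasteful when $t$ is only of order $n^{d/3}$, and the new formulation replaces $\log n$ by $\log\gamma$, which we will choose so that $\gamma$ may be as small as a constant power of $n$ (or even smaller). So first I would compare the quantity $s^*$ defined in~\eqref{eq:q(2)} with the quantity $s_0$ of Lemma~\ref{lem:Z_t}: up to the absolute constant $2^{(d+1)/3}$ versus $(d2^{d+1})^{1/3}$, they differ only in that $\log n$ is replaced by $\log\gamma$. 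The idea is to apply Lemma~\ref{lem:Z_t} not over the ambient set $[n]^d$ with its natural parameters, but to exploit that the \emph{same} counting argument goes through with $\log n$ replaced by $\log\gamma$ provided $t\geq 2s^*$ — which holds here since $t=\omega s^*$ with $\omega\geq 4$. Concretely, I expect Lemma~\ref{lem:Z_t} (or rather the proof given in Subsection~\ref{sec:large}, which I would re-examine and lift to the statement level) to yield
\begin{equation*}
\cZ_{n,d}(t)\leq n^{2(d+1)s^*}\left(\frac{e2^{d+5}n^d}{t^2}\right)^{t}.
\end{equation*}

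Second, I would bound the prefactor $n^{2(d+1)s^*}$ by absorbing it into the base of the exponential. Using $s^*=2^{(d+1)/3}n^{d/3}(\log\gamma)^{1/3}$ and $t=\omega s^*$, we have $2(d+1)s^*\log n = \frac{2(d+1)t\log n}{\omega}$, so $n^{2(d+1)s^*}=\exp\!\big(\tfrac{2(d+1)}{\omega}\,t\log n\big)=\big(n^{2(d+1)/\omega}\big)^{t}$. Hence
\begin{equation*}
\cZ_{n,d}(t)\leq \left(\frac{e2^{d+5}n^{d}\cdot n^{2(d+1)/\omega}}{t^2}\right)^{t}.
\end{equation*}
The remaining task is purely arithmetic: show that
\begin{equation*}
\frac{e2^{d+5}n^{d+2(d+1)/\omega}}{t^2}\leq \frac{4en^d}{t\gamma^{1-2/\omega}},
\end{equation*}
equivalently $2^{d+3}\,n^{2(d+1)/\omega}\,\gamma^{1-2/\omega}\leq t=\omega s^*=\omega 2^{(d+1)/3}n^{d/3}(\log\gamma)^{1/3}$. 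Writing $\gamma^{1-2/\omega}=\gamma\cdot\gamma^{-2/\omega}$ and using the constraint $\gamma<s^*/2^{d+1}\leq n^{d/3}(\log\gamma)^{1/3}/2^{2(d+1)/3}$, one sees $\gamma$ is at most (essentially) $n^{d/3}(\log\gamma)^{1/3}$, while the factor $n^{2(d+1)/\omega}\gamma^{-2/\omega}$ is a small power that should be controlled by the $\omega$ in front. I would split into the cases where $\gamma$ is polynomially large in $n$ versus $\gamma$ bounded, checking in each that the power of $n$ on the left, namely $d/3+2(d+1)/\omega\cdot(\text{something})$, stays below the $n^{d/3}$ on the right by a wide margin once $n$ is large.

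The main obstacle, I expect, is not any single estimate but making sure the constants line up so that the clean form $(4en^d/(t\gamma^{1-2/\omega}))^t$ is attained exactly rather than with an extra constant: this forces a careful accounting of where the factor $2^{d+5}$ from Lemma~\ref{lem:Z_t} goes and why $\omega\geq 4$ (rather than $\omega\geq 2$) is exactly the threshold needed — presumably because the exponent $1-2/\omega$ must be at least $1/2$ and because the prefactor absorption costs a factor $n^{2(d+1)/\omega}$ that must be dominated. A secondary subtlety is that Lemma~\ref{lem:Z_t} is stated with the specific $s_0$ involving $\log n$, so strictly speaking I would either (i) re-derive the bound with $\log\gamma$ from the argument in Subsection~\ref{sec:large}, or (ii) apply Lemma~\ref{lem:Z_t} verbatim and then note that when $\gamma\leq n$ the weaker bound with $\log n$ still suffices after the prefactor absorption — I would go with (i), tracking that the only place $n$ entered the definition of $s_0$ in that proof was as an upper bound for the number of "bad" configurations, which can equally be taken to be $\gamma$.
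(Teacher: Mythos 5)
Your reduction to Lemma~\ref{lem:Z_t} does not work: the prefactor $n^{2(d+1)s^*}$ cannot be absorbed. After your absorption step you need
$2^{d+3}\,n^{2(d+1)/\omega}\,\gamma^{1-2/\omega}\leq t=\omega\,2^{(d+1)/3}n^{d/3}(\log\gamma)^{1/3}$,
and for the extreme (and necessary) case $\omega=4$ this requires $n^{(d+1)/2}\gamma^{1/2}=O\big(n^{d/3}(\log\gamma)^{1/3}\big)$, which is false for every $d\geq1$ since $(d+1)/2>d/3$. The failure is not a matter of constants: for $t=\omega s^*$ with bounded $\omega$ the prefactor is $n^{2(d+1)s^*}=\exp\big(2(d+1)s^*\log n\big)$, while the logarithm of the target bound is only about $\omega s^*\big(\tfrac{2d}{3}\log n-(1-2/\omega)\log\gamma\big)$; already for $d=1$, $\omega=4$ the prefactor alone ($4s^*\log n$) exceeds the whole target ($\approx\tfrac83 s^*\log n$). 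Your suggested fix of rederiving Lemma~\ref{lem:Z_t} with $\log\gamma$ in place of $\log n$ changes only the size of $s^*$ (the $\log n$ there indeed comes from bounding $\log(N/R)$), but the base $n$ of the prefactor comes from the trivial bounds $\binom{n^d}{q_k}\leq n^{dq_k}$ for the seed and the $q$-steps, and that $n$ cannot be replaced by $\gamma$. So the iterative doubling scheme is intrinsically too lossy in this range of $t$.

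The paper instead proves the lemma in a single step: fix a seed Sidon set $S$ of size $s=s^*$, apply Lemma~\ref{lem:locally dense} to $G_S$ with $\beta=s^{*2}/(2^{d+1}n^d)$ and the \emph{smaller} threshold $R=n^d/\gamma$ (Corollary~\ref{lem:G_S(2)}), take $q=s^*$ and $r=t-2s^*$, and bound $\cZ_{n,d}(s^*)\leq\binom{n^d}{s^*}$ trivially. This yields
$\cZ_{n,d}(t)\leq\binom{n^d}{s^*}^2\binom{n^d/\gamma}{t-2s^*}$, and since $t-2s^*=(\omega-2)s^*$ the two factors $\big(en^d/s^*\big)^{2s^*}$ and $\big(en^d/(\gamma(\omega-2)s^*)\big)^{(\omega-2)s^*}$ combine exactly into $\big(C_\omega n^d/(t\gamma^{1-2/\omega})\big)^t$ with $C_\omega=e\omega/(\omega-2)^{1-2/\omega}\leq4e$ for $\omega\geq4$. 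The cost of the seed is thus paid as the exponent loss $\gamma^{1-2/\omega}$ rather than as a multiplicative $n^{\Theta(s^*)}$, which is precisely what your route cannot achieve. To repair your proof you would have to abandon the reduction to Lemma~\ref{lem:Z_t} and run the independent-set count directly in one step as above.
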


\begin{remark}\label{rem:number} For $d=1$, a version of Lemma~\ref{lem:Z_s+q+r(2)} was given in Lemma 3.3 in~\cite{sidon}, but we improve the previous one as follows: 
\begin{enumerate}\item We have a better upper bound on $\cZ_{n,1}(t)$ by removing the multiplicative factor $\omega$ in the base in Lemma 3.3 of~\cite{sidon}. \item We remove the variable $\sigma$ used in Lemma 3.3 of~\cite{sidon}. \end{enumerate}
\end{remark}

Our proof of Lemma~\ref{lem:Z_s+q+r(2)} will be given in Subsection~\ref{sec:small}. Lemma~\ref{lem:Z_s+q+r(2)} will be applied to our proof of the upper bound in Theorem~\ref{thm:middle}. (See Section~\ref{sec:upper bounds} for the proof.)

\subsection{The maximum size of Sidon sets in a random set $[n]^d_p$}\label{sec:random}

Recall that $[n]^d_p$ is a random set obtained  from $[n]^d$ by choosing each element independently with probability $p$. Also, recall that $F\([n]^d_p\)$ denotes the maximum size of Sidon sets in a random set $[n]^d_p$. We state our results on the upper and lower bounds of $F\([n]^d_p\)$ in Theorems~\ref{thm:small}--\ref{thm:large} in full. Recall that $f\ll g$ if $f/g=o(1)$.

\begin{theorem}\label{thm:small}
The following holds a.a.s.:\\
If \hskip 0.5em $n^{-d}\ll p\ll n^{-2d/3}$, then 
\begin{equation}\label{eq:small1} F\([n]^d_p\)= \(1+o(1)\)n^dp.\end{equation}
If \hskip 0.5em $ \displaystyle n^{-d}\ll p\leq2n^{-2d/3}$, then
\begin{equation}\label{eq:small2} \(1/3+o(1)\)n^dp \leq F\([n]^d_p\)\leq \(1+o(1)\)n^dp.\end{equation}
 \end{theorem}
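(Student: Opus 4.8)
The plan is to combine a Chernoff estimate for $|[n]^d_p|$ with the deletion method applied to the hypergraph of ``conflicts''. Both upper bounds are immediate: every Sidon set in $[n]^d_p$ lies in $[n]^d_p$, so $F([n]^d_p)\le|[n]^d_p|$, and since $\EE|[n]^d_p|=n^dp\to\infty$ (as $p\gg n^{-d}$), Chernoff's inequality gives $|[n]^d_p|=(1+o(1))n^dp$ a.a.s. For the lower bounds, let $H$ be the hypergraph on vertex set $[n]^d$ whose edges are the non-Sidon $3$-element sets together with the \emph{additive quadruples}, i.e.\ the $4$-element sets that split into two pairs of equal sum. Since every non-Sidon set contains, as the distinct elements of a single witnessing relation $x_1+x_2=x_3+x_4$, either a non-Sidon triple or an additive quadruple, a set is Sidon if and only if it is independent in $H$. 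Hence deleting one vertex from each edge of $H$ present in $[n]^d_p$ leaves a Sidon set, so, writing $m=|[n]^d_p|$ and letting $\Lambda_3,\Lambda_4$ be the numbers of edges of $H$ of sizes $3$ and $4$ present in $[n]^d_p$,
\[
F([n]^d_p)\ \ge\ m-\Lambda_3-\Lambda_4.
\]

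It remains to bound $\Lambda_3,\Lambda_4$ from above a.a.s. The number of non-Sidon triples in $[n]^d$ is $O(n^{2d})$ (such a triple $\{x,y,z\}$ satisfies one of $2x=y+z$, $2y=x+z$, $2z=x+y$), so $\EE\Lambda_3=O(n^{2d}p^3)=o(n^dp)$ for $p\le 2n^{-2d/3}$, whence $\Lambda_3=o(n^dp)$ a.a.s.\ by Markov's inequality. For $\Lambda_4$, the number of additive quadruples in $[n]^d$ equals, up to a $1+o(1)$ factor, $\tfrac18\sum_{\vec v}r(\vec v)^2$ with $r(\vec v)=\#\{(x,y)\in([n]^d)^2:x+y=\vec v\}$; since $r$ is multiplicative in the $d$ coordinates and $\sum_{v}\#\{(x,y):x+y=v\}^2=\tfrac23 n^3+O(n)$, this count is $\bigl(\tfrac18(\tfrac23)^d+o(1)\bigr)n^{3d}$, and hence $\EE\Lambda_4=\bigl(\tfrac18(\tfrac23)^d+o(1)\bigr)n^{3d}p^4$. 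A second-moment argument — using that each vertex of $[n]^d$ lies in $O(n^{2d})$, each pair in $O(n^d)$, and each triple in $O(1)$ additive quadruples, so the off-diagonal variance terms are of lower order — gives $\Lambda_4=(1+o(1))\EE\Lambda_4$ a.a.s.\ whenever $\EE\Lambda_4\to\infty$, while Markov's inequality suffices in the complementary range; in all cases $\Lambda_4\le\bigl(\tfrac18(\tfrac23)^d+o(1)\bigr)n^{3d}p^4+o(n^dp)$ a.a.s.

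Now substitute into $F([n]^d_p)\ge m-\Lambda_3-\Lambda_4$, using $m=(1+o(1))n^dp$ a.a.s. If $p\ll n^{-2d/3}$ then $n^{3d}p^4=(n^dp)(n^{2d}p^3)=o(n^dp)$, so a.a.s.\ $\Lambda_3+\Lambda_4=o(n^dp)$ and $F([n]^d_p)\ge(1-o(1))n^dp$, which together with the upper bound yields \eqref{eq:small1}. If $p\le 2n^{-2d/3}$ then $n^{3d}p^4=(n^dp)(n^{2d}p^3)\le 8n^dp$, so a.a.s.\ $\Lambda_3+\Lambda_4\le\bigl((\tfrac23)^d+o(1)\bigr)n^dp$ and therefore $F([n]^d_p)\ge\bigl(1-(\tfrac23)^d-o(1)\bigr)n^dp\ge(\tfrac13-o(1))n^dp$, since $(\tfrac23)^d\le\tfrac23$ for every $d\ge1$; together with the upper bound this yields \eqref{eq:small2}.

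I expect the main obstacle to be the second-moment concentration of $\Lambda_4$ in the window where $p$ has order $n^{-2d/3}$ (so that $\EE\Lambda_4\to\infty$): one must verify the codegree bounds for additive quadruples so that the correlations between quadruples are negligible. A secondary point is pinning down the leading constant $\tfrac18(\tfrac23)^d$ accurately enough that the resulting bound is exactly $\tfrac13 n^dp$ rather than something weaker; the degenerate ranges of $p$ (where $\EE\Lambda_4$ is bounded) and the bookkeeping for $\Lambda_3$ are routine.
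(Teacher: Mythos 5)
Your proposal is correct, but it takes a genuinely different route from the paper. The upper bound is handled identically (Chernoff applied to $|[n]^d_p|$). For the lower bound, however, the paper never argues directly in $[n]^d$: it quotes the one-dimensional bounds $F([n]_p)\geq(1+o(1))np$ and $F([n]_p)\geq(1/3+o(1))np$ from~\cite{sidon} with $n$ replaced by $n^d$, and transfers them via Cilleruelo's digit bijection $\phi_d\colon[n^d]\to[n]^d$, which sends Sidon sets to Sidon sets and carries $[n^d]_p$ to a set distributed as $[n]^d_p$ (Property~\ref{prop:phi} and Lemma~\ref{lem:lower bound}). You instead reprove the lower bound from scratch by the deletion method, and your execution checks out: a set is Sidon iff it is independent in the hypergraph of non-Sidon triples and additive quadruples (any violating relation $x_1+x_2=x_3+x_4$ involves at least three distinct points, since $\ZZ^d$ is torsion-free); the counts $O(n^{2d})$ and $\bigl(\tfrac18(\tfrac23)^d+o(1)\bigr)n^{3d}$ are right (multiplicativity of $r(\vec v)$ over coordinates gives the $(\tfrac23)^d$); and the codegree bounds $O(n^{2d})$, $O(n^d)$, $O(1)$ for a vertex, a pair, a triple make every off-diagonal variance term $o\bigl((\EE\Lambda_4)^2\bigr)$ once $n^dp\to\infty$, so Chebyshev with deviation $\epsilon n^dp$ gives $\Lambda_4\leq\EE\Lambda_4+o(n^dp)$ a.a.s.\ uniformly over the whole range, covering both the bounded and growing regimes of $\EE\Lambda_4$ at once. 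What each approach buys: the paper's reduction is two lines but outsources all the work to the earlier paper; yours is self-contained and in fact yields the slightly stronger constant $1-(\tfrac23)^d\geq\tfrac13$ in the critical range $p\leq 2n^{-2d/3}$, with equality precisely at $d=1$ --- which is exactly where the $1/3$ in the statement originates.
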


\begin{theorem}\label{thm:middle} Let  $\epsilon<d/3$.
If \hskip 0.5em $2 n^{-2d/3}\leq p\leq n^{-d/3-\epsilon}$, then there exist a positive absolute constant $c_1$ and a positive constant $c_2=c_2(d)$ such that a.a.s. 
\begin{equation*}
c_1 n^{d/3}\(\log (n^{2d}p^3)\)^{1/3}\leq  F\([n]^d_p\)\leq c_2 n^{d/3}\(\log (n^{2d}p^3)\)^{1/3}.\end{equation*}
\end{theorem}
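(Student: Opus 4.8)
The statement splits into an upper bound and a lower bound; the upper bound is a short first‑moment estimate fed by Lemma~\ref{lem:Z_s+q+r(2)}, and the lower bound is where essentially all of the work sits.

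\textbf{Upper bound.} I would fix a constant $\alpha$ with $1/3<\alpha<\tfrac{d}{3(d-3\epsilon)}$ (the interval is nonempty because $\epsilon<d/3$), set $\gamma=(n^{2d}p^{3})^{\alpha}$, let $s^{*}=2^{(d+1)/3}n^{d/3}(\log\gamma)^{1/3}$, pick a large constant $\omega\ge 4$, and take $t=\lceil\omega s^{*}\rceil$. In the range $2n^{-2d/3}\le p\le n^{-d/3-\epsilon}$ one has $8\le n^{2d}p^{3}\le n^{d-3\epsilon}$, so $\gamma>1$ and the hypotheses $0<\gamma<s^{*}/2^{d+1}$, $\omega\ge 4$, $t=\omega s^{*}$ of Lemma~\ref{lem:Z_s+q+r(2)} hold up to rounding. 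By Markov's inequality together with monotonicity of the Sidon property (a Sidon set of size $>t$ contains one of size $t$),
$$\PP\big(F([n]^d_p)\ge t\big)\ \le\ \EE\big[\cZ_{n,d}(t)\,p^{t}\big]\ =\ \cZ_{n,d}(t)\,p^{t}\ \le\ \Big(\frac{4en^{d}p}{t\,\gamma^{1-2/\omega}}\Big)^{t}.$$
A short computation shows $t\,\gamma^{1-2/\omega}/(n^{d}p)=\Theta\!\big(\omega\,(\log\gamma)^{1/3}\,(n^{2d}p^{3})^{\alpha(1-2/\omega)-1/3}\big)$; since $\alpha>1/3$ one can choose $\omega$ so large that $\alpha(1-2/\omega)>1/3$ and that this ratio exceeds $8e$, whence the base above is at most $1/2$. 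As $t=\Theta\big(n^{d/3}(\log(n^{2d}p^{3}))^{1/3}\big)\to\infty$, the displayed probability is $o(1)$, giving $F([n]^d_p)\le t\le c_{2}\,n^{d/3}(\log(n^{2d}p^{3}))^{1/3}$ a.a.s.\ for a suitable constant $c_{2}=c_{2}(d,\epsilon)$.

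\textbf{Lower bound.} I would reveal $R=[n]^d_p$ and pass to its \emph{collision hypergraph} $H$: the vertices are the elements of $R$, and the edges are the triples $\{x,y,z\}$ with $2x=y+z$ together with the $4$‑sets $\{x,y,z,w\}$ that split into two pairs of equal sum. By the characterisation of Sidon sets via these forbidden patterns, Sidon subsets of $R$ are exactly the independent sets of $H$. A.a.s.\ one has $N:=|R|=(1+o(1))n^{d}p$; the number of $4$‑edges is $M=\Theta(n^{3d}p^{4})$, so the average degree is $D:=4M/N=\Theta(n^{2d}p^{3})$; and there are only $\Theta(n^{2d}p^{3})$ triple‑edges, contributing an average degree $\Theta(n^{d}p^{2})=o(D)$ (all by routine first/second‑moment or Janson estimates, using $n^{d}p\to\infty$ throughout this range). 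I would then either analyse the random greedy Sidon process on $R$, or invoke an off‑the‑shelf bound on independence numbers of (essentially) uncrowded hypergraphs of the Ajtai–Komlós–Pintz–Spencer–Szemerédi type; either way one extracts an independent set, i.e.\ a Sidon subset of $R$, of size $\Omega\!\big(N D^{-1/3}(\log D)^{1/3}\big)=\Omega\!\big(n^{d/3}(\log(n^{2d}p^{3}))^{1/3}\big)$, as required (when $D=O(1)$ this degenerates to the trivial $\Omega(N)=\Omega(n^{d/3})$ bound, which already matches).

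\textbf{What must be checked, and the main obstacle.} To run such a bound one must verify that $H$ a.a.s.\ satisfies its hypotheses: (i) after discarding the few vertices of atypically large degree (there are $o(N)$ of them by the handshake inequality), the maximum degree is $O(D)$, with degree concentration following from Janson's inequality and a union bound over $\le n^{d}$ vertices — the sparsest sub‑range, where $D$ grows too slowly for concentration, needs a little extra care; and (ii) $H$ is \emph{uncrowded}, i.e.\ the number of pairs of $4$‑edges sharing at least two vertices is of smaller order than what the independence bound needs. Condition (ii) reduces to a lattice‑point count in $[n]^d$: two $4$‑collisions glued along two common vectors are determined by $O(n^{4d})$ choices (one $d$‑block of freedom less than a generic pair of collisions), so $R$ contains only $O(n^{4d}p^{6})$ such pairs in expectation, which is $o(MD)$ precisely because $n^{d}p\to\infty$; the same estimate controls overlaps of a $3$‑edge with a $4$‑edge. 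I expect \emph{item (ii), together with making the black‑box independence/greedy‑process bound go through in the presence of both $3$‑edges and $4$‑edges}, to be the crux; the remaining ingredients are first/second‑moment bookkeeping and the standard (but technically involved) analysis underlying the ABKSS‑type bound.
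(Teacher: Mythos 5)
Your upper bound is correct and is essentially the paper's argument: a first-moment count of Sidon $t$-sets weighted by $p^{t}$, fed by Lemma~\ref{lem:Z_s+q+r(2)}. The only difference is bookkeeping. The paper splits the range of $p$ into two cases, taking $\gamma=n^{2d}p^{3}$ for $p\leq n^{-2d/3+\delta}$ and capping $\gamma$ at $n^{d/3}$ above that (to respect the constraint $\gamma<s^{*}/2^{d+1}$), whereas you keep the single choice $\gamma=(n^{2d}p^{3})^{\alpha}$ with $1/3<\alpha<d/(3(d-3\epsilon))$; both choices satisfy the hypotheses of the lemma, and your computation that the base of the resulting geometric bound can be driven below $1/2$ by taking $\omega$ large is correct. (Your $c_{2}$ depends on $\epsilon$ as well as $d$; so, implicitly, does the paper's.)

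For the lower bound you take a genuinely different, and much heavier, route than the paper, and as written it is not complete. The paper never touches the collision hypergraph of $[n]^{d}_{p}$: it uses Cilleruelo's digit bijection $\phi_{d}\colon[n^{d}]\to[n]^{d}$ (Property~\ref{prop:phi}), under which $[n^{d}]_{p}$ has exactly the distribution of $[n]^{d}_{p}$ and Sidon sets of integers map to Sidon sets of vectors, so that $F([n^{d}]_{p})\leq F([n]^{d}_{p})$; the bound $c_{1}n^{d/3}(\log(n^{2d}p^{3}))^{1/3}$ is then literally Lemma~\ref{lem:prob_n}(c) applied to $[n^{d}]$ with the same $p$, i.e.\ a two-line transfer of the known one-dimensional result. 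What you propose instead --- degree and codegree estimates for the mixed $3$- and $4$-uniform collision hypergraph of $[n]^{d}_{p}$ followed by an uncrowded-hypergraph/random-greedy independence bound --- is essentially how that one-dimensional result is proved in~\cite{sidon}, and your parameter arithmetic ($N\asymp n^{d}p$, $D\asymp n^{2d}p^{3}$, $ND^{-1/3}(\log D)^{1/3}\asymp n^{d/3}(\log(n^{2d}p^{3}))^{1/3}$) is the right one, so the plan is viable in principle. But you explicitly leave the crux --- the independence-number theorem for hypergraphs carrying both $3$-edges and $4$-edges, plus degree concentration at the sparse end of the range --- as an unproved black box, and that is precisely the hard content. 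Noticing the transfer via $\phi_{d}$ lets you inherit all of that work from the $d=1$ case instead of redoing it in $[n]^{d}$.
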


\begin{theorem}\label{thm:middle_large}  Let  $\epsilon<d/3$. 
If \hskip 0.5em $n^{-d/3-\epsilon}\leq p\leq n^{-d/3}\(\log n\)^{8/3}$, then there exist a positive absolute constant $c_3$ and a positive constant $c_4=c_4(d)$ such that a.a.s. \begin{equation*}
c_3 n^{d/3}\(\log n\)^{1/3}\leq  F\([n]^d_p\)\leq c_4 n^{d/3}\(\log n\)^{4/3}.\end{equation*}
\end{theorem}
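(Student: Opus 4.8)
\textbf{Proof plan for Theorem~\ref{thm:middle_large}.}

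The plan is to split the statement into a lower bound and an upper bound. The lower bound should be immediate: since $p\geq n^{-d/3-\epsilon}$, we have $n^{2d}p^3\geq n^{2d-d-3\epsilon}=n^{d-3\epsilon}$, so $\log(n^{2d}p^3)=\Theta(\log n)$; moreover the hypothesis $p\leq n^{-d/3}(\log n)^{8/3}$ gives $p\geq 2n^{-2d/3}$ for large $n$, so Theorem~\ref{thm:middle} (or rather the lower bound half of its proof, which only needs $2n^{-2d/3}\leq p$) yields $F([n]^d_p)\geq c_1 n^{d/3}(\log(n^{2d}p^3))^{1/3}\geq c_3 n^{d/3}(\log n)^{1/3}$ a.a.s. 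I would double-check that the lower-bound construction in Section~\ref{sec:lower bounds} is valid throughout $p\leq n^{-d/3}(\log n)^{8/3}$ and does not require an upper cutoff on $p$ of the form $n^{-d/3-\epsilon}$; if it does, one instead runs the construction at the truncated density $p_0=n^{-d/3-\epsilon}$, since $[n]^d_{p_0}\subseteq[n]^d_p$ can be coupled, and a Sidon set in the sparser random set is a Sidon set in the denser one.

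For the upper bound I would use Lemma~\ref{lem:Z_t} together with a first-moment (union bound) argument over the size of the largest Sidon set. Fix $t=t(n)$ to be determined, with $t\geq 2s_0$ where $s_0=(d2^{d+1})^{1/3}n^{d/3}(\log n)^{1/3}$. The expected number of Sidon sets of size exactly $t$ contained in $[n]^d_p$ is at most $\cZ_{n,d}(t)\,p^t$, which by Lemma~\ref{lem:Z_t} is bounded by
\[
n^{2(d+1)s_0}\Bigl(\frac{e2^{d+5}n^d p}{t^2}\Bigr)^{t}.
\]
I want to choose $t=c_4 n^{d/3}(\log n)^{4/3}$ so that the parenthesised base is at most, say, $n^{-d/3}$ (or any fixed negative power of $n$), making the $t$-th power beat the prefactor $n^{2(d+1)s_0}=n^{O(n^{d/3}(\log n)^{1/3})}$. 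Since $p\leq n^{-d/3}(\log n)^{8/3}$, we have
\[
\frac{n^d p}{t^2}\leq \frac{n^{d}\cdot n^{-d/3}(\log n)^{8/3}}{c_4^2 n^{2d/3}(\log n)^{8/3}}=\frac{1}{c_4^2},
\]
so the base is at most $e2^{d+5}/c_4^2$, which is a small absolute constant once $c_4$ is chosen large enough (in terms of $d$). Then $(e2^{d+5}/c_4^2)^t=\exp(-\Omega(t))=\exp(-\Omega(n^{d/3}(\log n)^{4/3}))$, which dominates $n^{2(d+1)s_0}=\exp(O(n^{d/3}(\log n)^{4/3}))$ provided $c_4$ is large enough that the constant in the $-\Omega$ exceeds the implied constant in the $O$; this is where the value of $c_4=c_4(d)$ is pinned down. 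Hence the expected number of Sidon sets of size $t$ is $o(1)$, and since any Sidon set of size $>t$ contains one of size exactly $t$, Markov's inequality gives $F([n]^d_p)\leq t=c_4 n^{d/3}(\log n)^{4/3}$ a.a.s.

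The main obstacle is the bookkeeping around the prefactor $n^{2(d+1)s_0}$ in Lemma~\ref{lem:Z_t}: it is of the form $\exp(\Theta(n^{d/3}(\log n)^{4/3}))$ (after writing $\log$ of it), exactly the same order as the target bound $t$, so the argument only works because the $\exp(-\Omega(t))$ factor can be made to win by taking $c_4$ large — one must verify that the constant hidden in $\Omega(t)$, which comes from $\log(c_4^2/(e2^{d+5}))$ and hence grows with $c_4$, genuinely overtakes the fixed constant $2(d+1)(d2^{d+1})^{1/3}$ coming from $s_0$. A secondary technical point is handling the $1+o(1)$ in $p=n^a(1+o(1))$ versus the sharp inequalities $n^{-d/3-\epsilon}\leq p\leq n^{-d/3}(\log n)^{8/3}$ stated here; in this theorem $p$ is given by explicit inequalities rather than via the exponent $a$, so no such issue arises, but one should make sure Lemma~\ref{lem:Z_t}'s hypothesis $t\geq 2s_0$ is satisfied, which holds since $t=c_4 n^{d/3}(\log n)^{4/3}\gg n^{d/3}(\log n)^{1/3}=\Theta(s_0)$ for large $n$.
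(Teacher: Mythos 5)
Your proposal is correct and matches the paper's argument in substance: the paper obtains the upper bound by applying the first-moment bound with Lemma~\ref{lem:Z_t} (packaged as the upper bound of Theorem~\ref{thm:large} evaluated at $p=n^{-d/3}(\log n)^{8/3}$ and invoking monotonicity, which gives exactly your $t=Cn^{d/3}(\log n)^{4/3}$), and the lower bound by transferring the one-dimensional results via the bijection $\phi_d$, with the range issue you flag handled exactly by the truncation/coupling you describe. Your bookkeeping of the prefactor $n^{2(d+1)s_0}$ versus $\exp(-\Omega(t))$ is the right and only delicate point, and your resolution (taking $c_4$ large so that $\log(c_4^2/(e2^{d+5}))$ beats the fixed constant from $s_0$) is sound.
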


\begin{theorem}\label{thm:large}
If \hskip 0.5em $n^{-d/3}\(\log n\)^{8/3}\leq p\leq 1$, then there exist a positive absolute constant $c_5$ and a positive constant $c_6=c_6(d)$ such that a.a.s. \begin{equation*}
c_5n^{d/2}p^{1/2} \leq F\([n]^d_p\) \leq c_6n^{d/2}p^{1/2}.
\end{equation*}

\end{theorem}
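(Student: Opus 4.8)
The plan is to prove the two inequalities separately: the upper bound is a short consequence of Lemma~\ref{lem:Z_t} together with the first moment method, while for the lower bound I would reduce to the one‑dimensional case, which is the $d=1$ specialization of the present theorem and is proved in~\cite{sidon}.

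\emph{Upper bound.} Put $t=\lceil c_6\,n^{d/2}p^{1/2}\rceil$, where $c_6=c_6(d)$ is to be fixed at the end. Since $p\ge n^{-d/3}(\log n)^{8/3}$ we get $t\ge c_6\,n^{d/3}(\log n)^{4/3}$, which for large $n$ exceeds $2s_0=2(d2^{d+1})^{1/3}n^{d/3}(\log n)^{1/3}$, so Lemma~\ref{lem:Z_t} applies to this $t$. By linearity of expectation the expected number of Sidon subsets of $[n]^d_p$ of size $t$ is $\cZ_{n,d}(t)\,p^t$, which Lemma~\ref{lem:Z_t} bounds by
\[
 n^{2(d+1)s_0}\Bigl(\frac{e2^{d+5}n^dp}{t^{2}}\Bigr)^{t}\le n^{2(d+1)s_0}\Bigl(\frac{e2^{d+5}}{c_6^{2}}\Bigr)^{t}.
\]
First choose $c_6$ large enough that $e2^{d+5}/c_6^{2}\le 1/4$, so the right‑hand side is at most $n^{2(d+1)s_0}2^{-2t}$. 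Since $\log_2\!\bigl(n^{2(d+1)s_0}\bigr)=2(d+1)s_0\log_2 n=\Theta\!\bigl(n^{d/3}(\log n)^{4/3}\bigr)$ while $2t\ge 2c_6\,n^{d/3}(\log n)^{4/3}$, a further enlargement of $c_6$ (still depending only on $d$) makes the whole quantity $2^{-\Omega(n^{d/3}(\log n)^{4/3})}\to 0$. Markov's inequality then shows that a.a.s. $[n]^d_p$ contains no Sidon set of size $t$, i.e. $F([n]^d_p)\le c_6\,n^{d/2}p^{1/2}$ a.a.s. The hypothesis $p\ge n^{-d/3}(\log n)^{8/3}$ enters at exactly one place: it is precisely what lets the exponential gain $2^{-\Omega(t)}$ swallow the parasitic factor $n^{2(d+1)s_0}$.

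\emph{Lower bound.} Let $\psi\colon\{0,1,\dots,n^d-1\}\to[n]^d$ be the base‑$n$ digit bijection, $\psi(x)=(x_0,x_1,\dots,x_{d-1})$ where $x=\sum_i x_in^i$. If $y_1+y_2=y_3+y_4$ holds coordinate‑wise in $\ZZ^d$, then reassembling the digits gives $\psi^{-1}(y_1)+\psi^{-1}(y_2)=\psi^{-1}(y_3)+\psi^{-1}(y_4)$ in $\ZZ$; hence for any Sidon set $S\subseteq\{0,\dots,n^d-1\}$ the image $\psi(S)$ is a Sidon set in $[n]^d$. Because $\psi$ is a bijection we may couple the two random sets by setting $[n]^d_p:=\psi\bigl([n^d]_p\bigr)$, and under this coupling every Sidon subset of $[n^d]_p$ maps to a Sidon subset of $[n]^d_p$ of the same size, so $F([n]^d_p)\ge F\bigl([n^d]_p\bigr)$. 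Applying the $d=1$ result to the ground set $\{0,\dots,n^d-1\}$ of size $N=n^d$ (for which $N^{-1/3}(\log N)^{8/3}=n^{-d/3}(d\log n)^{8/3}$) gives a.a.s. $F\bigl([n^d]_p\bigr)\ge c_5(n^d)^{1/2}p^{1/2}=c_5\,n^{d/2}p^{1/2}$, provided $p\ge n^{-d/3}(d\log n)^{8/3}$. Since $d$ is fixed, the factor $d^{8/3}$ only shrinks the admissible range by a multiplicative constant; the narrow remaining sliver $n^{-d/3}(\log n)^{8/3}\le p< d^{8/3}n^{-d/3}(\log n)^{8/3}$ can be absorbed into the polylogarithmic threshold or handled by running the construction of~\cite{sidon} directly over $\ZZ$ with ground set $\{0,\dots,n^d-1\}$.

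\emph{Main obstacle.} With Lemma~\ref{lem:Z_t} and the $d=1$ theorem of~\cite{sidon} in hand, neither inequality is long, so the genuine content is what gets imported through the lower bound: the dense‑regime construction of~\cite{sidon}, namely the structured random/alteration argument that produces a Sidon set of size $\Theta(\sqrt{Np})$ inside a random subset of $[N]$ (this is what beats both the greedy bound $\Theta(N^{1/3})$ and the ``intersect a maximum Sidon set'' bound $\Theta(N^{1/2}p)$). The one point requiring care is bookkeeping the polylogarithmic threshold across the reduction, i.e. verifying that the constant $d^{8/3}$ does not erode the range of $p$ claimed in~\eqref{eq:b(a)}; if a self‑contained proof in dimension $d$ is preferred, replicating that construction natively in $[n]^d$ (and re‑deriving the $(\log n)^{8/3}$ barrier) is the main work.
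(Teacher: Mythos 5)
Your proposal is correct and follows essentially the same route as the paper: the first moment method plus Lemma~\ref{lem:Z_t} for the upper bound, and Cilleruelo's base-$n$ digit bijection (the paper's $\phi_d$, Property~\ref{prop:phi}) combined with the one-dimensional lower bound from~\cite{sidon} for the lower bound. The sliver of $p$ you flag at the bottom of the range does not actually arise in the paper's argument, because the imported one-dimensional result (Lemma~\ref{lem:prob_n}(d)) holds for all $p\geq N^{-1/3}(\log N)^{2/3}$, which after the substitution $N=n^d$ gives the threshold $d^{2/3}n^{-d/3}(\log n)^{2/3}$, well below $n^{-d/3}(\log n)^{8/3}$.
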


\subsection{Organization}

In Section~\ref{sec:Sidon}, we prove Theorem~\ref{thm:main1} and  Lemmas~\ref{lem:Z_t} and~\ref{lem:Z_s+q+r(2)}. Our proof of the upper bounds in Theorems~\ref{thm:small}--\ref{thm:large} will be provided in Section~\ref{sec:upper bounds}. In Section~\ref{sec:lower bounds}, we prove the lower bounds in Theorems~\ref{thm:small}--\ref{thm:large}.

%Moreover, (iii) and (iv)  hold with probability $1-O(1/n^2).$

%%%%%%%%%%%%%%%%%%%%%%%%%%%%%%%%%%%%%
\old{
\section{Previous results}

\subsection{A problem of Cameron and Erd\H os}
\label{sec:prob_Cameron+Erdos}

Let~$\cZ_{n,d}$ be the family of Sidon sets contained in~$[n]$.  Over two
decades ago, Cameron and Erd\H os~\cite{cameron90:_no_sets_of_integ}
proposed the problem of estimating~$\cZ_{n,d}$.  Observe that one
trivially has
\begin{equation}
  \label{eq:triv_Z_n_bds}
  2^{F(n)}\leq \cZ_{n,d}\leq \sum_{1\leq i \leq F(n)}{n\choose i}=n^{(1/2+o(1))\sqrt n}.
\end{equation}
Cameron and Erd\H os~\cite{cameron90:_no_sets_of_integ} improved the
lower bound in~\eqref{eq:triv_Z_bds} by showing
that $\limsup_n \cZ_{n,d}2^{-F(n)}=\infty$ and asked whether the upper
bound could also be strengthened.  Our result is as follows.

\begin{theorem}
  \label{thm:Z_upbd}
  There is a constant~$c$ for which~$\cZ_{n,d}\leq2^{cF(n)}$.
\end{theorem}

\subsection{Probabilistic results}
\label{sec:prob-results}

\begin{figure}
 \begin{center}
  \includegraphics[scale=.95]{graph}
 \end{center} 
 \caption{The graph of~$b=b(a)$}
 \label{fig:graph}
\end{figure}

\begin{theorem}
  \label{thm:intro_n}
  Let~$0\leq a\leq1$ be a fixed constant.
  Suppose~$m=m(n)=(1+o(1))n^a$.  There exists a constant $b=b(a)$ such
  that a.a.s.
  \begin{equation}
    \label{eq:b_def}
    F([n]_m)=n^{b+o(1)}.
  \end{equation}
  Furthermore,
  \begin{equation}
    \label{eq:b(a)}
    b(a)=
    \begin{cases}
      a    &\text{if }\,\,0\leq a\leq1/3,\\
      1/3  &\text{if }\,\,1/3\leq a\leq2/3,\\
      a/2  &\text{if }\,\,2/3\leq a\leq1.
    \end{cases}
  \end{equation}
\end{theorem}

}
%%%%%%%%%%%%%%%%%%%%%%%%%%%%%%%%%%%%%%%%%%%%

\section{The number of Sidon sets in $[n]^d$ of a given size}\label{sec:Sidon}

\subsection{The number of Sidon sets}\label{sec:number of Sidon sets}

Now we show Theorem~\ref{thm:main1} by using Lemma~\ref{lem:Z_t}.

\begin{proof}[\textbf{Proof of Theorem~\ref{thm:main1}}] 
We have that
\begin{eqnarray*}
\cZ_{n,d} &=& \sum_{t=1}^{n^d} \cZ_{n,d}(t) = \sum_{t=1}^{F([n]^d)} \cZ_{n,d}(t),
\end{eqnarray*}
where the second equality holds since $F([n]^d)$ is the maximum size of Sidon sets in $[n]^d$. Since $F([n]^d)=n^{d/2}(1+o(1))$, we have that
\begin{eqnarray}\label{eq:sum_Z}
\cZ_{n,d} = \sum_{t=1}^{n^{d/3}\log n} \cZ_{n,d}(t)+\sum_{t=n^{d/3}\log n+1}^{F([n]^d)} \cZ_{n,d}(t).
\end{eqnarray}

The first sum of~\eqref{eq:sum_Z} is estimated by 
\begin{eqnarray*}
\sum_{t=1}^{n^{d/3}\log n} \cZ_{n,d}(t) &\leq & \sum_{t=1}^{n^{d/3}\log n} {n^d \choose t} \leq n^{d/3}\log n\cdot \(\frac{en^d}{n^{d/3}\log n}\)^{n^{d/3}\log n} \\
&\leq & n^{(2d/3)n^{d/3}\log n(1+o(1))}\leq 2^{c_1n^{d/3}(\log n)^2},
\end{eqnarray*}
where $c_1=c_1(d)$ is a positive constant depending only on $d$. Next, it follows from Lemma~\ref{lem:Z_t} that the second sum of~\eqref{eq:sum_Z} is estimated by
\begin{eqnarray*}
\sum_{t=n^{d/3}\log n+1}^{F([n]^d)} \cZ_{n,d}(t) &\leq & F([n]^d)\cdot n^{c_2 n^{d/3}(\log n)^{1/3}}\(\frac{c_3 n^d}{(F([n]^d))^2}\)^{F([n]^d)} \\
&\leq & 2^{c_4 n^{d/2}},
\end{eqnarray*}
where $c_2, c_3$, and $c_4$ are positive constants depending only on $d$.
Therefore, in view of identity~\eqref{eq:sum_Z}, the above estimates of the first and second sums of~\eqref{eq:sum_Z} imply
Theorem~\ref{thm:main1}.
\end{proof}

\subsection{The number of Sidon sets of a larger size}\label{sec:large}

Recall that $\cZ_{n,d}(t)$ is the number of Sidon sets in $[n]^d$ of size $t$.
Now we show Lemma~\ref{lem:Z_t} which gives an upper bound on $\cZ_{n,d}(t)$ for $t=\Omega\(n^{d/3}(\log n)^{1/3}\)$. Our proof uses the following strategy from~\cite{sidon}.  Let $s$ be an integer with $s<t$, and let $S$ be a seed Sidon set in $[n]^d$ of size $s$. For such a Sidon set $S$, we estimate the number of extensions of $S$ to larger Sidon sets $S^*$ of size $t$ containing $S$. Then, by summing over all Sidon sets $S$ of size $s$, we will obtain an upper bound on $\cZ_{n,d}(t)$.
In order to bound the number of extensions, we define the following graph. 
\begin{definition}\label{def:G_S}For a Sidon set $S$ in $[n]^d$, let $G_S$ be the graph on $V=[n]^d\setminus S$ in which $\{v_1, v_2\}$ is an edge of $G_S$ if and only if there exist some $b_1,b_2\in S$ such that $v_1+b_1=v_2+b_2$. %Note that since $F([n]^d)\leq n^{d/2}(1+o(1))=o(n^d)$, we have $|V|=n^d(1-o(1))$ for any Sidon set $S$ in $[n]^d$.   
\end{definition}
Observe that if $S^*$ is a Sidon set in $[n]^d$ of size $t$ containing $S$, then the set $S^*\setminus S$ is an independent set in  $G_S$ of size $t-s$. Hence, the number of extensions of $S$ to larger Sidon sets $S^*$ of size $t$ is bounded above by the number of independent sets in $G_S$ of size $t-s$.

In order to bound the number of independent sets in $G_S$ of a given size, we will use the following result from~\cite{sidon}.% roughly stating that if a graph $G$ satisfies some dense condition, then the number of independent sets in $G$ is much bounded.

\begin{lemma}[Lemma 3.1 of~\cite{sidon}]\label{lem:locally dense} 
For positive integers $N$ and $R$ and a positive real number $\beta$, let $G$ be a graph on $N$ vertices such that for every vertex set $U$ with $|U|\geq R$, the number $e(U)$ of edges  in the subgraph of $G$ induced on $U$ satisfies \begin{equation}\label{eq:locally dense_cond1} e(U)\geq \beta{|U| \choose 2}. \end{equation}
If  $q$ is a positive integer  satisfying \begin{equation}\label{eq:locally dense_cond2}  q\geq \beta^{-1}\log(N/R),\end{equation} 
then, for all positive integers $r$, the number of independent sets in $G$ of size $q+r$ is at most
\begin{equation} {N\choose q}{R \choose r}.
\end{equation}
\end{lemma}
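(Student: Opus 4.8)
The plan is to prove this via the Kleitman--Winston / graph-container greedy algorithm, following the same line of argument as in \cite{sidon}. Fix a linear order on $V(G)$. Given an independent set $I$ of size $q+r$, I would run the following procedure: maintain an ``available'' set $W$, initialised to $W:=V(G)$, and a set $A$ of ``selected'' vertices, initialised to $A:=\emptyset$; repeat until $|A|=q$: let $w$ be the vertex of largest degree in the induced subgraph $G[W]$ (ties broken by the fixed order); if $w\in I$, put $A:=A\cup\{w\}$ and $W:=W\setminus(\{w\}\cup N_G(w))$; if $w\notin I$, put $W:=W\setminus\{w\}$. Call the resulting set $A=A(I)$; note $A\subseteq I$ and $|A|=q$.

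Three facts drive the proof. First, the procedure can be replayed knowing only $A$ (not $I$): by induction on the steps, at the moment $w$ is examined one has $w\in I$ if and only if $w\in A$ --- the forward implication because $A\subseteq I$, the reverse because if $w\in I$ the procedure selects it. Hence the final set $W$ is a function of $A$ alone; write $W=W(A)$. Second, since $I$ is independent, deleting $N_G(w)$ together with a selected vertex $w\in I$ removes no other vertex of $I$, and rejections remove only non-members of $I$; so $|W\cap I|$ decreases by exactly one per selection and stays positive through the first $q$ selections (using $r\ge1$), which both guarantees the procedure can complete its $q$ selections and yields $I\setminus A\subseteq W(A)$ at termination. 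Third, whenever $|W|\ge R$ the local density hypothesis \eqref{eq:locally dense_cond1} gives $2e(G[W])\ge\beta|W|(|W|-1)$, so the maximum degree in $G[W]$ is at least $\beta(|W|-1)$; thus each selection replaces $|W|$ by at most $(1-\beta)(|W|-1)<(1-\beta)|W|$ as long as $|W|\ge R$. Since $|W|$ is non-increasing, after $q$ selections $|W(A)|\le\max\{(1-\beta)^qN,\,R\}\le\max\{e^{-\beta q}N,\,R\}=R$, where the last equality uses \eqref{eq:locally dense_cond2}.

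The lemma then follows by a union bound: every independent set $I$ of size $q+r$ is recovered from the pair $\bigl(A(I),\,I\setminus A(I)\bigr)$, where $A(I)$ is a $q$-subset of $V(G)$ and $I\setminus A(I)$ is an $r$-subset of the set $W(A(I))$, which has size at most $R$ and depends only on $A(I)$. Hence the number of such $I$ is at most $\sum_{A\in\binom{V(G)}{q}}\binom{|W(A)|}{r}\le\binom{N}{q}\binom{R}{r}$, and the degenerate cases $N\le R$ or $q+r>N$ are trivial.

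I expect the only delicate point to be the ``replay'' argument --- verifying rigorously that the branch taken at each step is determined by $A$ rather than by all of $I$ --- since this is exactly what lets the $r$ leftover vertices be chosen from a container $W(A)$ of size at most $R$ that is indexed by the $q$-set $A$. Everything else (the density-to-maximum-degree estimate, the geometric shrinkage of $|W|$, and the independence bookkeeping used both for the shrinkage via neighbour-deletion and for the solvability of the greedy step) is routine.
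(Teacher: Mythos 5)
Your proof is correct: the replay argument (the branch at each examined vertex is determined by membership in the final set $A$), the containment $I\setminus A\subseteq W(A)$ via independence, and the geometric shrinkage $|W|\mapsto(1-\beta)(|W|-1)$ at each of the $q$ selections while $|W|\geq R$ together give $|W(A)|\leq\max\{e^{-\beta q}N,R\}=R$ and hence the bound $\binom{N}{q}\binom{R}{r}$. Note that the paper itself does not prove this lemma but imports it as Lemma 3.1 of~\cite{sidon}; your Kleitman--Winston container argument is exactly the standard proof of that result, so there is nothing to compare beyond confirming it is sound.
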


Next we show that the graph $G_S$ with a Sidon set $S$ satisfies condition~\eqref{eq:locally dense_cond1} with suitable $R$ and $\beta$.

\begin{lemma}\label{lem:G_S} For a Sidon set $S$ in $[n]^d$ of size $s$, the graph $G_S$ on $N:=n^d-s%=n^d(1-o(1))
$ vertices satisfies the following: 
For every vertex set $U$ with \begin{equation}\label{eq:U}|U|\geq (2^{d+1}/s) n^d,\end{equation} the number $e(U)$ of edges  in the subgraph of $G_S$ induced on $U$ satisfies
\begin{equation}\label{eq:G_S} e(U)\geq \frac{s^2}{2^{d+1}n^d}{|U| \choose 2}.
\end{equation}
\end{lemma}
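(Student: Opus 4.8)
The plan is to count edges of $G_S$ via a double-counting argument on "collisions" $v_1 + b_1 = v_2 + b_2$ with $v_1,v_2 \in U$ and $b_1,b_2\in S$. First I would set up the right bookkeeping: for each ordered pair $(b_1,b_2)\in S\times S$ with $b_1\neq b_2$, the translation $v\mapsto v + (b_1 - b_2)$ is an injection, so the number of $v_1\in U$ with $v_1 + b_1 - b_2 \in U$ (equivalently $v_2 := v_1 + b_1 - b_2 \in U$) is at least $|U| - \bigl|[n]^d\setminus U\bigr|$-type bound; more precisely, $v_1$ ranges over $U$ and $v_2$ ranges over $U$, and the constraint $v_2 = v_1 + (b_1-b_2)$ forces both endpoints into $U$, so the count of valid $v_1$ is at least $|U| - (n^d - |U|)\cdot(\text{something})$. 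The cleaner route: for a fixed nonzero difference $\delta = b_1 - b_2$, the number of $v_1\in U$ with $v_1+\delta\in U$ is at least $2|U| - n^d$ if we worked in a group, but since $[n]^d$ is not a group under coordinatewise addition (entries live in $\{0,\dots,n-1\}$ without wraparound), I would instead embed $[n]^d \hookrightarrow (\ZZ/(2n))^d$ or just directly note that the number of $v_1 \in U$ with $v_1 + \delta$ also in $[n]^d$ and in $U$ is at least $|U| - (\text{number of } v_1 \text{ with } v_1+\delta\notin U)$, and crudely $\geq |U| - (n^d - |U|)$ once $|U|$ is not too small — which is exactly where hypothesis \eqref{eq:U} will be used to keep this positive and proportional to $|U|$.

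Next I would sum over all ordered pairs $(b_1,b_2)$ with $b_1\neq b_2$ in $S$ (there are $s(s-1) \geq s^2/2$ of them for $s\geq 2$). Each collision $v_1+b_1 = v_2+b_2$ with $v_1\neq v_2$ gives an edge of $G_S$, but I must control how many times a single edge $\{v_1,v_2\}$ can be produced: if $v_1 + b_1 = v_2 + b_2$ and also $v_1 + b_1' = v_2 + b_2'$ then $b_1 - b_2 = b_2' - b_1'$... wait, $b_1-b_1' = b_2-b_2'$, and since $S$ is a Sidon set this forces $\{b_1,b_2'\} = \{b_1',b_2\}$ as multisets, hence $b_1 = b_1'$ (the case $b_1 = b_2'$ would give $v_1 = v_2$). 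So each unordered edge is counted at most a bounded number of times — in fact, for a fixed ordered pair $(v_1,v_2)$ the difference $v_1 - v_2 = b_2 - b_1$ is a fixed nonzero vector, and since $S$ is Sidon this vector is represented as a difference $b_2-b_1$ in at most one way; so each ordered pair $(v_1,v_2)$ arises from at most one ordered pair $(b_1,b_2)$. That is the crucial use of the Sidon property and it means the count of collisions equals (at most a constant times) the count of edges, with no overcounting blow-up. The factor $2^{d+1}$ will come from comparing $|U|$ against $n^d$: each of the $\geq s^2/2$ difference vectors contributes $\gtrsim |U| - (n^d-|U|)$ ordered collisions, and under \eqref{eq:U} this is at least roughly $|U|\cdot s/2^{d+1} \cdot (\text{stuff})$, giving $e(U) \gtrsim \tfrac{s^2}{2^{d+1}n^d}\binom{|U|}{2}$ after dividing by $2$ for ordering and tidying constants.

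The main obstacle I anticipate is the non-group structure of $[n]^d$: unlike working in $\ZZ/m\ZZ$, a translate $v+\delta$ of a point in $[n]^d$ can fall outside the box, so the naive "at least $2|U|-n^d$ intersecting translates" bound is not literally valid and one loses something near the boundary. The standard fix (used in the $d=1$ case in \cite{sidon}) is either to pass to the torus $(\ZZ/2n\ZZ)^d$, where $[n]^d$ sits as a subset of density $2^{-d}$ and every difference of two elements of $[n]^d$ is "genuine", or to argue directly that the boundary loss is absorbed by the hypothesis $|U| \geq (2^{d+1}/s)n^d$; the torus embedding is cleanest and is presumably the source of the $2^{d+1}$ (one factor $2^d$ from the density of $[n]^d$ in the torus, and an extra factor $2$ from $s(s-1)\geq s^2/2$ and ordered-vs-unordered). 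Once that embedding is in place, the rest is the double count above plus elementary estimates relating $|U|-(n^d-|U|)$ to $|U|$ under \eqref{eq:U}, and comparing $|U|^2$ to $\binom{|U|}{2}$.
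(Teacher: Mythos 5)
There is a genuine gap: the per-difference lower bound at the heart of your count is both false near the boundary and, more importantly, of the wrong order of magnitude. Hypothesis \eqref{eq:U} only guarantees $|U|\geq (2^{d+1}/s)n^d$, and in the regime where the lemma is applied $s$ is large (of order $n^{d/3}$ and beyond), so $|U|$ may be far smaller than $n^d/2$; the quantity $2|U|-n^d$ you propose as a lower bound for $\#\{v_1\in U: v_1+(b_1-b_2)\in U\}$ is then negative, and \eqref{eq:U} does not ``keep this positive and proportional to $|U|$'' --- that is not its role. Moreover, even for dense $U$ no per-difference bound of order $|U|$ can hold: the average of $|U\cap(U+\delta)|$ over the $s(s-1)$ nonzero differences $\delta$ of $S$ is only about $|U|^2/(2n)^d$ (i.e.\ $|U|$ times the density of $U$), and individual differences (e.g.\ those with a coordinate near $\pm(n-1)$) can contribute nothing at all. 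So neither the torus embedding nor any boundary bookkeeping rescues the difference-by-difference pigeonhole; what is needed is a second-moment argument.

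The paper's proof supplies exactly that. Set $d(w)=\#\{(u,b)\in U\times S: u+b=w\}$ for $w\in[2n]^d$; then $\sum_w d(w)=s|U|$ exactly (no boundary loss, since $u+b$ always lies in $[2n]^d$). Your correct observation that the Sidon property forbids two distinct pairs $(b_1,b_2)\neq(b_1',b_2')$ from witnessing the same ordered pair $(v_1,v_2)$ is precisely the statement that the auxiliary bipartite graph between $[2n]^d$ and $U$ is $C_4$-free, whence $e(U)=\sum_w\binom{d(w)}{2}$. Convexity of $\binom{x}{2}$ then gives $e(U)\geq (2n)^d\binom{s|U|/(2n)^d}{2}$, and hypothesis \eqref{eq:U} is used only to ensure that the average degree $s|U|/(2n)^d$ is at least $2$, so that $\binom{x}{2}\geq x^2/4$ applies and \eqref{eq:G_S} follows. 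In short: your collision bookkeeping and your use of the Sidon property are right, but the missing idea is to lower-bound the total collision count by Cauchy--Schwarz over the sums $w$, rather than difference by difference.
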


\begin{proof} Let $U$ be an arbitrary vertex set of $G_S$ with $|U|\geq (2^{d+1}/s)n^d$. We define an auxiliary bipartite graph $B$ with disjoint vertex classes $[2n]^d$ and $U$ in which a vertex $w\in [2n]^d$ is adjacent to a vertex $u\in U$ if and only if there exists $b\in S$ such that $w=u+b$. Observe that distinct vertices $u_1$ and $u_2$ in $U$  have a common neighbor $w\in [2n]^d$ if and only if $\{u_1, u_2\}$ is an edge of the subgraph $G_S[U]$ of $G_S$ induced on $U$. Hence, we infer that $ e(U)\leq \sum_{w\in [2n]^d}{ {d_B(w)}\choose 2}$, where $d_B(w)$ denotes the degree of $w$ in~$B$.

%Note that distinct vertices $u_1$ and $u_2$ in $U$  have a common neighbor $w\in [2n]^d$ if and only if there exist $b_1$ and $b_2$ in $S$ such that $u_1+b_1=w=u_2+b_2$. This is equivalent to the condition that $\{u_1, u_2\}$ is an edge of $G_S$.
Now we claim that \begin{equation}\label{eq:e(U)} e(U)= \sum_{w\in [2n]^d}{ {d_B(w)}\choose 2}.\end{equation} In order to prove~\eqref{eq:e(U)}, we need to show that $B$ contains no $4$-cycle, i.e., that two distinct vertices in $U$ do not have two distinct common neighbors in $[2n]^d$. Towards contradiction, suppose that there is a $4$-cycle in $B$, that is,  both $u_1$ and $u_2$ $(u_1\neq u_2)$ in $U$ are adjacent to both $w_1$ and $w_2$ $(w_1\neq w_2)$ in $[2n]^d$. From the definition of $B$, there exist some $b_{11}, b_{12}, b_{21}, b_{22}\in S$ such that $w_1=u_1+b_{11}$, $w_1=u_2+b_{12}$, $w_2=u_1+b_{21}$ and $w_2=u_2+b_{22}$. Thus, $u_1+b_{11}=u_2+b_{12}$ and $u_1+b_{21}=u_2+b_{22}$, and hence, we have that $b_{11}-b_{21}=b_{12}-b_{22}$, that is, $b_{11}+b_{22}=b_{12}+b_{21}$. Since $S$ is a Sidon set, we infer that $\{b_{11},b_{22}\}=\{b_{12},b_{21}\}$. However, by the assumptions $u_1\neq u_2$ and $w_1\neq w_2$, we have that $b_{11}\neq b_{12}$ and $b_{11}\neq b_{21}$, which contradicts to $\{b_{11},b_{22}\}=\{b_{12},b_{21}\}$. Therefore, there is no 4-cycle in $B$, and hence, identity~\eqref{eq:e(U)} holds.

It follows from~\eqref{eq:e(U)} that
 \begin{eqnarray*}
e(U)=\sum_{w\in [2n]^d}{{d_B(w)} \choose 2 }\geq (2n)^d {\frac{1}{(2n)^d}\sum_{w\in [2n]^d} d_B(w) \choose 2},
\end{eqnarray*}
where the inequality follows from the convexity of $\binom{x}{2}$.
Since $B$ is a bipartite graph in which $d_B(u)=s$ for all $u\in U$, we have that
\begin{eqnarray*}
e(U) &=& (2n)^d {\frac{1}{(2n)^d}\sum_{u\in U} d_B(u) \choose 2} 
= (2n)^d {s|U|/(2n)^d \choose 2}\\&=&(2n)^d\cdot \frac{1}{2}\frac{s|U|}{(2n)^d}\(\frac{s|U|}{(2n)^d}-1\).
\end{eqnarray*}
Under the assumption~\eqref{eq:U}, that is, $1\leq \frac{1}{2}\frac{s|U|}{(2n)^d}$, we infer that
\begin{eqnarray*}
e(U) \geq \frac{s|U|}{2}\cdot \frac{1}{2}\frac{s|U|}{(2n)^d} = \frac{s^2}{2^{d+1}n^d}\frac{|U|^2}{2} \geq  \frac{s^2}{2^{d+1}n^d}{|U| \choose 2}.
\end{eqnarray*} 
This completes the proof of Lemma~\ref{lem:G_S}.
\end{proof}

Now we are ready to bound the number of Sidon sets of a larger size by applying Lemmas~\ref{lem:locally dense} and~\ref{lem:G_S} as follows.
\begin{lemma}\label{lem:Z_s+q+r} Let $n$, $s$, and $q$ be positive integers satisfying
\begin{equation}\label{eq:q} s^2q\geq d2^{d+1}n^d \log n. \end{equation}
Then, for any integer $r\geq 0$, we have
\begin{equation}\label{eq:Z_s+q+r} \cZ_{n,d}(s+q+r)\leq \cZ_{n,d}(s){n^d \choose q}{2^{d+1}n^d/s \choose r}.\end{equation}
\end{lemma}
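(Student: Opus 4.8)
The plan is to combine the two preceding lemmas via the ``seed-and-extend'' strategy sketched just before Definition~\ref{def:G_S}. First I would fix a seed Sidon set $S$ of size $s$ and count the Sidon sets $S^*$ of size $s+q+r$ containing $S$: as observed after Definition~\ref{def:G_S}, $S^*\setminus S$ must be an independent set of size $q+r$ in the graph $G_S$ on $N=n^d-s$ vertices, so the number of such extensions is at most the number of independent sets of size $q+r$ in $G_S$. To bound this, I would apply Lemma~\ref{lem:locally dense} to $G=G_S$ with the parameters supplied by Lemma~\ref{lem:G_S}, namely $N=n^d-s$, $R=\lceil(2^{d+1}/s)n^d\rceil$, and $\beta=s^2/(2^{d+1}n^d)$; Lemma~\ref{lem:G_S} is exactly the statement that hypothesis~\eqref{eq:locally dense_cond1} holds for these values.

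The next step is to check the numerical condition~\eqref{eq:locally dense_cond2}, i.e. $q\geq\beta^{-1}\log(N/R)$. Here $\beta^{-1}=2^{d+1}n^d/s^2$ and $N/R\leq n^d/((2^{d+1}/s)n^d)=s/2^{d+1}\leq s\leq n^d$, so $\log(N/R)\leq \log(n^d)=d\log n$, whence $\beta^{-1}\log(N/R)\leq d2^{d+1}n^d\log n/s^2$. Thus the hypothesis~\eqref{eq:q} of the lemma, $s^2q\geq d2^{d+1}n^d\log n$, is precisely what guarantees $q\geq\beta^{-1}\log(N/R)$, so Lemma~\ref{lem:locally dense} applies and yields that the number of independent sets of size $q+r$ in $G_S$ is at most $\binom{N}{q}\binom{R}{r}\leq\binom{n^d}{q}\binom{2^{d+1}n^d/s}{r}$ (bounding $N\leq n^d$ and $R\leq 2^{d+1}n^d/s$, being a little careful that the ceiling only helps here since $\binom{x}{r}$ is monotone and we may just write $R=\lceil (2^{d+1}/s)n^d\rceil$ throughout and relax at the end).

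Finally I would sum over all choices of the seed. Every Sidon set of size $s+q+r$ contains at least one Sidon subset of size $s$ (in fact many, but one suffices for an upper bound), so
\[
\cZ_{n,d}(s+q+r)\leq \sum_{S}\#\{\text{extensions of }S\}\leq \cZ_{n,d}(s)\cdot\binom{n^d}{q}\binom{2^{d+1}n^d/s}{r},
\]
where the sum is over all Sidon sets $S$ of size $s$ and there are $\cZ_{n,d}(s)$ of them; this is exactly~\eqref{eq:Z_s+q+r}. The only mild subtlety — and the step I would be most careful about — is the bookkeeping around $R$: ensuring that $R$ is a legitimate integer parameter for Lemma~\ref{lem:locally dense}, that $|U|\geq R$ in Lemma~\ref{lem:G_S} matches $|U|\geq(2^{d+1}/s)n^d$, and that the final binomial $\binom{2^{d+1}n^d/s}{r}$ is interpreted with the same (rounded) value; none of this affects the stated bound, since replacing $R$ by the slightly larger $2^{d+1}n^d/s$ only weakens the inequality in the right direction. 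No genuine obstacle arises beyond correctly threading the hypotheses of the two cited lemmas together.
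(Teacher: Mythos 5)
Your proposal is correct and follows essentially the same route as the paper: fix a seed Sidon set $S$ of size $s$, bound its extensions by the number of independent sets of size $q+r$ in $G_S$ via Lemma~\ref{lem:locally dense} with the parameters $R=(2^{d+1}/s)n^d$ and $\beta=s^2/(2^{d+1}n^d)$ supplied by Lemma~\ref{lem:G_S}, verify~\eqref{eq:locally dense_cond2} from~\eqref{eq:q} using $\log(N/R)\leq\log(n^d)=d\log n$, and sum over the $\cZ_{n,d}(s)$ seeds. The rounding bookkeeping you flag is harmless, exactly as you say.
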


\begin{proof} Fix $S$ as an arbitrary Sidon set in $[n]^d$ of size $s$. We first consider the number of Sidon sets $S^*$ of size $s+q+r$ containing $S$.
Recall that if $S^*$ is a Sidon set of size $s+k$ containing $S$, then the set $S^*\setminus S$ is an independent set in  $G_S$ of size $k$. Hence, in order to bound the number of Sidon sets of size $s+q+r$ containing $S$, we are going to estimate the number of independent sets of size $q+r$ in $G_S$. To this end, we will apply Lemma~\ref{lem:locally dense} to the graph $G_S$. 

We first check conditions~\eqref{eq:locally dense_cond1} and~\eqref{eq:locally dense_cond2} of Lemma~\ref{lem:locally dense}. First, Lemma~\ref{lem:G_S} implies that~\eqref{eq:locally dense_cond1} holds with $R= (2^{d+1}/s)n^d$ and $\beta=s^2/(2^{d+1}n^d)$. Next,  condition~\eqref{eq:locally dense_cond2} follows from ineqality~\eqref{eq:q}  because 
\begin{eqnarray*} q&\geq&  \frac{d2^{d+1}n^d\log n}{s^2} = \frac{2^{d+1}n^d\log (n^d)}{s^2} \geq\beta^{-1}\log\(\frac{N}{R}\).\end{eqnarray*}

Thus, Lemma~\ref{lem:locally dense} with $G=G_S$, $N\leq n^d$ and $R= (2^{d+1}/s)n^d$ gives that for any integer $r\geq0$, the number of independent sets in $G_S$ of size $q+r$ is at most $ {n^d \choose q}{2^{d+1}n^d/s \choose r}.$ Consequently, the number of Sidon sets of size $s+q+r$ containing $S$ is at most $ {n^d \choose q}{2^{d+1}n^d/s \choose r}.$ 
By summing over all Sidon sets $S$ of size $s$, we infer that the number of all Sidon sets of size $s+q+r$ is at most $\cZ_{n,d}(s){n^d \choose q}{2^{d+1}n^d/s \choose r},$ which completes the proof of Lemma~\ref{lem:Z_s+q+r}. 
\end{proof}

Now we show Lemma~\ref{lem:Z_t} by applying Lemma~\ref{lem:Z_s+q+r} iteratively.

\begin{proof}[\textbf{Proof of Lemma~\ref{lem:Z_t}}]
Since $F([n]^d)=n^{d/2}(1+o(1))$, we infer  that $\cZ_{n,d}(t)=0$ if $t>1.1n^{d/2}$ for a sufficiently large $n=n(d)$, depending only on $d$. 
Hence, let $t$ be an integer satisfying \begin{equation}\label{eq:range of t}2s_0\leq t \leq 1.1 n^{d/2},\end{equation} where $s_0=(d2^{d+1} n^d \log n)^{1/3}$. Let $K$ be the largest integer satisfying $t2^{-K}\geq s_0$. We define three sequences $s_k$, $q_k$, and $r_k$ as follows: 
for $1\leq k\leq K+1$, \begin{equation}\label{eq:s,q,r} s_k=2s_{k-1}=t2^{-K+k-1}, \hskip 1em q_k=q_{k-1}/4=s_0/4^{k-1}, \hskip 1em and \hskip 1em r_k=s_{k+1}-s_k-q_k.
\end{equation} 
Under the assumption $s_0=(d2^{d+1} n^d \log n)^{1/3}$ and the definition of $s_k=2s_{k-1}$ and $q_k=q_{k-1}/4$, we have that, for $1\leq k\leq K$, $$s_k^2q_k=s_1^2q_1\geq s_0^3=d2^{d+1} n^d \log n.$$ Equivalently, condition~\eqref{eq:q} holds with $s=s_k$ and $q=q_k$. Thus,  
Lemma~\ref{lem:Z_s+q+r} with $s=s_k$, $q=q_k$, and $r=r_k$ gives that for $1\leq k \leq K$,
\begin{equation*} \cZ_{n,d}(s_{k+1})=\cZ_{n,d}(s_k+q_k+r_k)\leq \cZ_{n,d}(s_k){n^d \choose q_k}{2^{d+1}n^d/s_k \choose r_k}.\end{equation*}
Consequently,
\begin{equation}\label{eq:t}
\cZ_{n,d}(t)=\cZ_{n,d}(s_{K+1})\leq {n \choose s_1}\cdot \prod_{k=1}^{K}{n^d \choose q_k}\cdot \prod_{k=1}^{K}{2^{d+1}n^d/s_k \choose r_k}.
\end{equation} 

Now we estimate three parts of the right-hand side of~\eqref{eq:t} separately. The first part is estimated by
 \begin{equation}\label{eq:z_t(1)} {n \choose s_1 }\leq {n \choose 2s_0} \leq n^{2s_0}.
\end{equation} 
Next, for the second part of~\eqref{eq:t}, we have that
 \begin{eqnarray}\label{eq:z_t(2)} \prod_{k=1}^{K}{n^d \choose q_k} &\leq &\prod_{k=1}^{K}(n^d)^{q_k} = (n^d)^{\sum_{k=1}^{K} q_k}\overset{\eqref{eq:s,q,r} }{=}(n^d)^{s_0\sum_{k=1}^{K}4^{-k+1}}\nonumber\\&\leq& (n^d)^{s_0 (4/3)}\leq n^{2ds_0},
\end{eqnarray} where the second inequality follows from $\sum_{k=1}^{K}4^{-k+1}\leq \sum_{k=1}^{\infty}4^{-k+1}=4/3.$ For the last part of~\eqref{eq:t}, we first have that
 \begin{eqnarray*}
\prod_{k=1}^{K}{2^{d+1}n^d/s_k \choose r_k}&\leq & \prod_{k=1}^{K}{2^{d+1}n^d/s_k \choose r_k+q_k}
\end{eqnarray*}
 since
\begin{eqnarray*}
\frac{r_k+q_k}{2^{d+1}n^d/s_k}\leq \frac{s_{k+1}-s_k}{2^{d+1}n^d/s_k}=\frac{s_k}{2^{d+1}n^d/s_k}=\frac{s_k^2}{2^{d+1}n^d}\leq \frac{t^2}{2^{d+1}n^d}\overset{\eqref{eq:range of t}}{\leq} \frac{1}{2}.
\end{eqnarray*}
We further have that
\begin{eqnarray}\label{eq:z_t(3)}\prod_{k=1}^{K}{2^{d+1}n^d/s_k \choose r_k}&\leq &\prod_{k=1}^{K}{2^{d+1}n^d/s_k \choose s_{k+1}-s_k}=\prod_{k=1}^{K}{2^{d+1}n^d/s_k \choose s_k} \nonumber \\
&\leq & \prod_{k=1}^{K}\({e2^{d+1}n^d \over s_k^2}\)^{s_k} 
= \prod_{k=1}^{K}\({e2^{d+1}n^d \over s_{K-k+1}^2}\)^{s_{K-k+1}} \nonumber \\ &\overset{\eqref{eq:s,q,r} }{=}&\prod_{k=1}^{K}\({e2^{d+1}2^{2k}n^d \over t^2}\)^{t2^{-k}} \nonumber \\ &=&\({e2^{d+1}n^d \over t^2}\)^{t\sum_{k=1}^{K}2^{-k}}2^{2t\sum_{k=1}^{K}k2^{-k}} \nonumber \\
&\leq & \({e2^{d+1}n^d \over t^2}\)^{t}2^{4t} =\({e2^{d+5}n^d \over t^2}\)^{t}. \end{eqnarray}
% where the last inequality follows from $\sum_{k=1}^{K}2^{-k}\leq \sum_{k=1}^{\infty}2^{-k}=1$ and $\sum_{k=1}^{K}k2^{-k}\leq \sum_{k=1}^{\infty}k2^{-k}\leq 2$.

In view of~\eqref{eq:t}, combining~\eqref{eq:z_t(1)}--\eqref{eq:z_t(3)} yields that
$\cZ_{n,d}(t)\leq n^{2(d+1)s_0}\({e2^{d+5}n^d \over t^2}\)^{t}$ for $2s_0\leq t \leq 1.1 n^{d/2}$, which completes our proof of Lemma~\ref{lem:Z_t}.
\end{proof}

\subsection{The number of Sidon sets of a smaller size}\label{sec:small}

Now we show Lemma~\ref{lem:Z_s+q+r(2)} which gives an upper bound on $\cZ_{n,d}(t)$ for $t=\Omega(n^{d/3})$. Our proof of Lemma~\ref{lem:Z_s+q+r(2)} is similar to the proof of Lemma~\ref{lem:Z_s+q+r}, and hence, we only give a sketch.
By weakening condition~\eqref{eq:U} of Lemma~\ref{lem:G_S} into $|U|\geq  n^d/\gamma$, where $1/\gamma\geq 2^{d+1}/s$, we clearly have the following corollary of Lemma~\ref{lem:G_S}. (We omit the proof.)

\begin{corollary}\label{lem:G_S(2)} Let $\gamma$ be an arbitrary real number with $0<\gamma\leq s/2^{d+1}$. For a Sidon set $S$ in $[n]^d$ of size $s$, the graph $G_S$ on $N:=n^d-s$ vertices satisfies the following: 
For every vertex set $U$ with $|U|\geq  n^d/\gamma,$ the number $e(U)$ of edges in the subgraph of $G_S$ induced on $U$ satisfies
\begin{equation}\label{eq:G_S(2)} e(U)\geq \frac{s^2}{2^{d+1}n^d}{|U| \choose 2}.
\end{equation}
\end{corollary}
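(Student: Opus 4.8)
The plan is to obtain Corollary~\ref{lem:G_S(2)} as an immediate consequence of Lemma~\ref{lem:G_S}, by checking that the hypothesis imposed on $U$ in the corollary is at least as strong as the one in the lemma. First I would record the elementary fact that $0<\gamma\leq s/2^{d+1}$ forces $1/\gamma\geq 2^{d+1}/s$, and hence $n^d/\gamma\geq (2^{d+1}/s)n^d$; this is the only place where the positivity of $\gamma$ is used, namely to keep the direction of the inequality when inverting. Consequently, every vertex set $U$ with $|U|\geq n^d/\gamma$ automatically satisfies $|U|\geq (2^{d+1}/s)n^d$, which is exactly condition~\eqref{eq:U} of Lemma~\ref{lem:G_S}.

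Given this, Lemma~\ref{lem:G_S} applies verbatim to $G_S$ and $U$: inequality~\eqref{eq:G_S} holds, and that is literally inequality~\eqref{eq:G_S(2)}. So the corollary follows in one line. The role of $\gamma$ is not mathematical content but bookkeeping: it allows one to choose the independence-threshold $n^d/\gamma$ freely (subject only to $\gamma\leq s/2^{d+1}$) when later feeding $G_S$ into Lemma~\ref{lem:locally dense}, which is precisely what happens in the proof of Lemma~\ref{lem:Z_s+q+r(2)}.

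If one preferred a self-contained argument rather than a citation, one would simply replay the proof of Lemma~\ref{lem:G_S}: form the same auxiliary bipartite graph $B$ between $[2n]^d$ and $U$, use the Sidon property of $S$ to see that $B$ contains no $4$-cycle (this is the step where the Sidon hypothesis enters, and it has nothing to do with $|U|$), deduce $e(U)=\sum_{w\in[2n]^d}\binom{d_B(w)}{2}$, apply the convexity of $\binom{x}{2}$ to obtain $e(U)\geq (2n)^d\binom{s|U|/(2n)^d}{2}$, and finally use $|U|\geq n^d/\gamma$ together with $\gamma\leq s/2^{d+1}$ to conclude $s|U|/(2n)^d\geq 2$, which makes the last binomial coefficient at least $\tfrac14\big(s|U|/(2n)^d\big)^2$ and yields $e(U)\geq \frac{s^2}{2^{d+2}n^d}|U|^2\geq \frac{s^2}{2^{d+1}n^d}\binom{|U|}{2}$. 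I do not anticipate any obstacle: the statement is a mild reparametrisation of Lemma~\ref{lem:G_S}, which is exactly why the author writes that the proof is omitted; the only thing to be careful about is the inequality flip $\gamma\leq s/2^{d+1}\iff 1/\gamma\geq 2^{d+1}/s$.
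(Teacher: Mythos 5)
Your proposal is correct and is exactly the deduction the paper intends: the paper omits the proof precisely because, as you observe, $0<\gamma\leq s/2^{d+1}$ gives $n^d/\gamma\geq (2^{d+1}/s)n^d$, so the hypothesis on $U$ in the corollary implies condition~\eqref{eq:U} of Lemma~\ref{lem:G_S} and the conclusion is inequality~\eqref{eq:G_S} verbatim. Your optional self-contained replay of the bipartite-graph/convexity argument is also accurate.
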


Combining Lemma~\ref{lem:locally dense} and Corollary~\ref{lem:G_S(2)} implies Lemma~\ref{lem:Z_s+q+r(2)} as follows.

\begin{proof}[\textbf{Proof of Lemma~\ref{lem:Z_s+q+r(2)}}] Before applying Lemma~\ref{lem:locally dense} with $G=G_S$,  we first check conditions~\eqref{eq:locally dense_cond1} and~\eqref{eq:locally dense_cond2} in Lemma~\ref{lem:locally dense} with $G=G_S$.   
First, by Corollary~\ref{lem:G_S(2)},  the graph $G_S$ satisfies~\eqref{eq:locally dense_cond1} with $R= n^d/\gamma$ and $\beta=s^2/(2^{d+1}n^d)$. Next,  condition~\eqref{eq:locally dense_cond2} %, that is, $s^*^3/(2^{d+1}n^d)\geq \log (\gamma(1-o(1))$ 
 holds by setting $s=q=s^*$ where $s^*=\(2^{(d+1)}n^{d}\log \gamma\)^{1/3}$.% because of the assumption~\eqref{eq:q(2)}, that is, $s^*^3= 2^{(d+1)}n^d \log \gamma$.

Now Lemma~\ref{lem:locally dense} with $G=G_S$ and $s=q=s^*$ implies that, for $t\geq 4s^*$, 
\begin{eqnarray}\label{eq:Z_s+q+r(10)} \cZ_{n,d}(t)&\leq& \cZ_{n,d}(s^*){n^d \choose s^*}{n^d/\gamma \choose t-2s^*} \leq {n^d \choose s^*}{n^d \choose s^*}{n^d/\gamma \choose t-2s^*} \nonumber \\
&\leq & \({en^d \over s^*}\)^{2s^*} \({en^d \over \gamma(t-2s^*)  }\)^{t-2s^*}=\(\frac{en^d}{s^*} \)^t \(\frac{1}{\gamma(\omega-2)}\)^{t-2s^*} \nonumber\\
&=& \(\frac{e\omega n^d}{t}\)^t \(\frac{1}{\gamma(\omega-2)} \)^{t(1-2/\omega)} = \(\frac{e\omega n^d}{t[\gamma(\omega-2)]^{1-2/\omega}} \)^t \nonumber \\
&=& \(C_{\omega} \frac{n^d}{t \gamma^{1-2/\omega}} \)^t,
\end{eqnarray}
where $C_{\omega}=e\omega/ (\omega-2)^{1-2/\omega}$.
We also have that
\begin{eqnarray}\label{eq:C_omega}
C_{\omega}=\frac{e\omega}{(\omega-2)^{1-2/\omega}}\leq \frac{e\omega}{(\omega/2)^{1-2/\omega}}=e 2^{1-2/\omega} \omega^{2/\omega} \leq 2e\omega^{2/\omega}\leq 2e 4^{1/2} = 4e,
\end{eqnarray} where the first inequality follows from the assumption $\omega\geq 4$, and the last inequality follows from the fact that $f(x)=x^{2/x}$ is a decreasing function for $x\geq 4$.
Combining~\eqref{eq:Z_s+q+r(10)} and~\eqref{eq:C_omega} completes our proof of Lemma~\ref{lem:Z_s+q+r(2)}.
\end{proof}

%%%%%%%%%%%%%%%%%%%%%%%%%%%%%%%%%%%%%%%%%%%%%%%%

\section{Upper bounds on $F([n]_p)$}\label{sec:upper bounds}

In this section we prove the upper bounds in Theorems~\ref{thm:small}--\ref{thm:large}. We first provide our proof of the upper bound in Theorem~\ref{thm:small}.
In the proof, we will use the following version of Chernoff's bound.

\begin{lemma}[\textbf{Chernoff's bound}, Corollary 4.6 in~\cite{MU2005}] \label{lem:Chernoff} Let $X_i$ be independent random variables such that $\Pr[X_i=1]=p_i$ and $\Pr[X_i=0]=1-p_i$, and let $X=\sum_{i=1}^{n} X_i$.  
For $0<\lambda<1$,
\begin{equation*}\Pr \Big[|X-\EE(X)|\geq \lambda\EE(X)\Big]\leq 2 \exp\Big(-\frac{\lambda^2}{3}\EE(X)\Big). \end{equation*}
\end{lemma}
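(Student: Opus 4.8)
The plan is to obtain this multiplicative Chernoff bound by the exponential moment method, handling the upper and lower tails separately and then combining them with a union bound, which is exactly what produces the factor $2$ in the statement.

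For the upper tail, fix any $t>0$ and apply Markov's inequality to the nonnegative random variable $\e^{tX}$, so that
\[
\Pr\big[X\geq (1+\lambda)\EE(X)\big]=\Pr\big[\e^{tX}\geq \e^{t(1+\lambda)\EE(X)}\big]\leq \e^{-t(1+\lambda)\EE(X)}\,\EE\big[\e^{tX}\big].
\]
Using independence of the $X_i$ and the elementary inequality $1+x\leq \e^x$, one gets
\[
\EE\big[\e^{tX}\big]=\prod_{i=1}^{n}\big(1+p_i(\e^{t}-1)\big)\leq \prod_{i=1}^{n}\e^{p_i(\e^{t}-1)}=\e^{(\e^{t}-1)\EE(X)},
\]
since $\EE(X)=\sum_i p_i$. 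Optimizing the resulting estimate over $t$, i.e.\ taking $t=\log(1+\lambda)$, yields
\[
\Pr\big[X\geq (1+\lambda)\EE(X)\big]\leq \left(\frac{\e^{\lambda}}{(1+\lambda)^{1+\lambda}}\right)^{\EE(X)},
\]
and a symmetric computation with $t<0$, choosing $t=\log(1-\lambda)$, gives the analogous bound $\big(\e^{-\lambda}(1-\lambda)^{-(1-\lambda)}\big)^{\EE(X)}$ for the lower tail $\Pr\big[X\leq (1-\lambda)\EE(X)\big]$.

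It then remains to run a one-variable calculus estimate showing that for $0<\lambda<1$ one has $\e^{\lambda}(1+\lambda)^{-(1+\lambda)}\leq \e^{-\lambda^2/3}$ and $\e^{-\lambda}(1-\lambda)^{-(1-\lambda)}\leq \e^{-\lambda^2/2}\leq \e^{-\lambda^2/3}$; after substituting these into the two tail bounds and adding the probabilities, we arrive at
\[
\Pr\big[|X-\EE(X)|\geq \lambda\EE(X)\big]\leq 2\exp\!\left(-\frac{\lambda^2}{3}\EE(X)\right).
\]
The only mildly delicate point is the last step, namely verifying $g(\lambda):=(1+\lambda)\log(1+\lambda)-\lambda-\lambda^2/3\geq 0$ on the whole interval $[0,1)$ (and the easier companion inequality for the lower tail), which I would do by expanding $\log(1+\lambda)$ as a power series or by differentiating and checking signs; everything else—Markov's inequality, the product formula for the moment generating function of a sum of independent Bernoulli variables, and the optimization over $t$—is entirely standard. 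Since the statement is quoted verbatim from~\cite{MU2005}, one may of course also simply cite that reference in lieu of reproducing this argument.
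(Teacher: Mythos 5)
Your argument is correct and is exactly the standard exponential-moment proof of the multiplicative Chernoff bound (Markov applied to $e^{tX}$, the product bound $\prod_i(1+p_i(e^t-1))\leq e^{(e^t-1)\EE(X)}$, optimization at $t=\pm\log(1\pm\lambda)$, the elementary inequalities $(1+\lambda)\log(1+\lambda)-\lambda\geq\lambda^2/3$ and $(1-\lambda)\log(1-\lambda)+\lambda\geq\lambda^2/2$ on $(0,1)$, and a union bound giving the factor $2$). The paper offers no proof of this lemma at all --- it is quoted verbatim as Corollary 4.6 of the cited reference --- so your derivation simply reconstructs that reference's argument, and citing it, as you note at the end, would also suffice.
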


\begin{proof}[\textbf{Proof of the upper bound in Theorem~\ref{thm:small}}] We clearly have that
$F\([n]^d_p\)\leq \big|[n]^d_p\big|.$ Hence, in order to show the upper bound in Theorem~\ref{thm:small}, it suffices to show that a.a.s. \begin{equation}\label{eq:X}X:=\big|[n]^d_p\big|\leq n^dp(1+o(1)). \end{equation}

By the definition of $[n]^d_p$, we have that the expectation $\EE(X)$ is $n^d p$. Then,  Lemma~\ref{lem:Chernoff} implies that a.a.s. $X=n^dp(1+o(1))$ provided that $p\gg n^{-d}$. It gives~\eqref{eq:X}, and hence, it completes our proof of the upper bound in Theorem~\ref{thm:small}.
\end{proof}

%%%%%%%%%%%%%%

Next, we prove the upper bound in Theorem~\ref{thm:middle}
using Lemma~\ref{lem:Z_s+q+r(2)} as follows. 

\begin{proof}[\textbf{Proof of the upper bound in Theorem~\ref{thm:middle}}] For the upper bound, it suffices to show that there exists a positive constant $c=c(d)$ such that
\begin{equation}\label{eq:small prob} \Pr\Big[\mbox{$[n]^d_p$ contains a Sidon set of size $cn^{d/3}\(\log (n^{2d}p^3)\)^{1/3}$}\Big] \rightarrow 0 \hskip 0.5em \mbox{ as }  n\rightarrow \infty.
\end{equation}

The first moment method gives that the probability that $[n]^d_p$ contains a Sidon set of size $t$ is at most $p^t \cZ_{n,d}(t)$. We will use Lemma~\ref{lem:Z_s+q+r(2)} in order to bound $\cZ_{n,d}(t)$. Now we define suitable numbers $\gamma, \omega$, and $t$ satisfying both~\eqref{eq:q(2)} and~\eqref{eq:q(3)} in Lemma~\ref{lem:Z_s+q+r(2)}. For a positive constant $\delta\leq d/9$, we consider two cases separately: the first case is when $2n^{-2d/3}\leq p\leq n^{-2d/3+\delta}$ and the second case is for the remaining range of $p$, that is, $n^{-2d/3+\delta}\leq p\leq n^{-d/3-\epsilon}$.

\begin{itemize}
\item \textbf{Case 1:} This case is when $2n^{-2d/3}\leq p\leq n^{-2d/3+\delta}$. Let $\gamma=n^{2d}p^3$. Under the assumption $2n^{-2d/3}\leq p\leq n^{-2d/3+\delta}$, we have that $8\leq \gamma\leq n^{3\delta}\leq n^{d/3}$, and hence, the second inequality of~\eqref{eq:q(2)} holds.
Let $\displaystyle t=Cn^{d/3}\(\log(n^{2d}p^3)\)^{1/3}$, where $C=C(d)$ is a sufficiently large positive constant depending only on $d$. Then, the inequality of~\eqref{eq:q(3)} holds. 

With the choice of $\gamma$ and $t$, Lemma~\ref{lem:Z_s+q+r(2)} implies that
\begin{eqnarray}\label{eq:prob of Sidon}
\Pr\Big[\mbox{$[n]^d_p$ contains a Sidon set of size $t$}\Big] 
\leq  p^t\cZ_{n,d}(t)\leq \(\frac{4en^dp}{t\gamma^{1-2/\omega}}\)^t.
\end{eqnarray}
The base in the right-hand side of~\eqref{eq:prob of Sidon} is
\begin{eqnarray}\label{eq:base}
\frac{4en^dp}{t\gamma^{1-2/\omega}}&\leq&  \frac{4en^dp}{Cn^{d/3}(n^{2d}p^3)^{0.99}}\leq (n^{2d}p^3)^{1/3-0.99}\nonumber \\&\leq& 8^{1/3-0.99}\leq 0.5,
\end{eqnarray}
where the first inequality holds because of $t\geq Cn^{d/3}$, and the second inequalilty holds since $C\geq 4e$.
Combining~\eqref{eq:prob of Sidon} and~\eqref{eq:base} yields that
\begin{equation*}
\Pr\Big[\mbox{$[n]^d_p$ contains a Sidon set of size $t$}\Big] \leq 0.5^t \rightarrow 0 \hskip 0.5em \mbox{ as } n\rightarrow \infty,
\end{equation*}
which gives~\eqref{eq:small prob}.

\item \textbf{Case 2:} This case is when $n^{-2d/3+\delta}\leq p\leq n^{-d/3-\epsilon}$. Let $\gamma= n^{d/3}$. Then 
$$s^*=c_0 n^{d/3}\(\log(n^{d/3})\)^{1/3}=c'_0n^{d/3}(\log n)^{1/3}$$ with positive constants $c_0=c_0(d)$ and $c'_0=c'_0(d)$, and hence, the second inequality of~\eqref{eq:q(2)} holds.
Let $t=Cn^{d/3}\(\log(n^{2d}p^3)\)^{1/3}=C'n^{d/3}(\log n)^{1/3}$, where $C=C(d)$ and $C'=C'(d,\delta)$ are sufficiently large positive constants. Then, the inequality of~\eqref{eq:q(3)} holds.

With the choice of $\gamma$ and $t$, Lemma~\ref{lem:Z_s+q+r(2)} implies~\eqref{eq:prob of Sidon}.
 The base in the right-hand side of~\eqref{eq:prob of Sidon} is
\begin{eqnarray*}
\frac{4en^dp}{t\gamma^{1-2/\omega}}&\leq&  \frac{4en^d\cdot n^{-d/3-\epsilon}}{C'n^{d/3}(\log n)^{1/3} n^{(d/3)(1-\epsilon')}},
\end{eqnarray*}
where $\epsilon'=\epsilon'(d)$ is a positive constant such that $\epsilon'$ goes to $0$ as $C\rightarrow \infty$. We have that
\begin{eqnarray}\label{eq:base2}
\frac{4en^dp}{t\gamma^{1-2/\omega}} &\leq& \frac{n^{2d/3-\epsilon}}{n^{2d/3-d\epsilon'/3}\log n}\leq 0.5,
\end{eqnarray}
where the first and second inequalities follow from a choice of a sufficiently large $C=C(d)$.
Therefore, inequalities~\eqref{eq:prob of Sidon} and~\eqref{eq:base2} yield~\eqref{eq:small prob}.
\end{itemize}

Therefore, the analysis in \textbf{Case 1} and \textbf{Case 2} implies~\eqref{eq:small prob}, which completes our proof of the upper bound in Theorem~\ref{thm:middle}.
\end{proof}

%%%%%%%%%%%%

Next we show the upper bounds in Theorems~\ref{thm:middle_large} and \ref{thm:large}. First, we claim that the upper bound in Theorem~\ref{thm:middle_large} follows from the upper bound in Theorem~\ref{thm:large}. Indeed,
by monotonicity, the upper bound in Theorem~\ref{thm:large} with $p=n^{-d/3}(\log n)^{8/3}$ gives the upper bound in Theorem~\ref{thm:middle_large}.
Therefore, it only remains to show the upper bound in Theorem~\ref{thm:large}. We show it by using Lemma~\ref{lem:Z_t} as follows.

\begin{proof}[\textbf{Proof of the upper bound in Theorem~\ref{thm:large}}] 

Let $q(t)$ be the probability that there exists a Sidon set in $[n]^d_p$ of size $t$. In order to show the upper bound in Theorem~\ref{thm:large}, it suffices to prove that there exists a positive constant $C=C(d)$ such that if $t=Cn^{d/2}p^{1/2}$, then $q(t)=o(1)$.

The first moment method gives that $q(t)\leq   p^t \cZ_{n,d}(t).$ Since $t=Cn^{d/2}p^{1/2}\geq Cn^{d/3}(\log n)^{4/3}$, Lemma~\ref{lem:Z_t} implies that
\begin{equation*}
q(t) \leq   n^{2(d+1)s_0}\(\frac{e2^{d+5}n^dp}{t^2}\)^t,
\end{equation*}
where $s_0=c n^{d/3}(\log n)^{1/3}$ with a positive constant $c=c(d)$. From the choice $t=Cn^{d/2}p^{1/2}$, we have that
\begin{equation*}
q(t)\leq n^{c'n^{d/3}(\log n)^{1/3}}\(\frac{e2^{d+5}}{C^2}\)^t\leq n^{c'n^{d/3}(\log n)^{1/3}} \(\frac{1}{2}\)^{t},
\end{equation*}
where $c'=c'(d)$ is a positive constant.
It is equivalent to the inequality
\begin{equation}
\log q(t)\leq c'n^{d/3}(\log n)^{4/3} + t\log\(1/2\).
\end{equation}
Since $t=Cn^{d/2}p^{1/2}\geq Cn^{d/3}(\log n)^{4/3}$ with a sufficiently large constant $C=C(d)$, we infer that
$ \log q(t)\leq  -2\log n,
$
that is, $q(t)\leq n^{-2}=o(1)$. This completes our proof of the upper bound in Theorem~\ref{thm:large}.
\end{proof}

%%%%%%%%%%%%%%%%%%%%%%%%%%%%%%%%%%%%%%%%%%%%%

\section{Lower bounds on $F([n]_p)$}\label{sec:lower bounds}

We are going to show the lower bounds in Theorems~\ref{thm:small}--\ref{thm:large}. To this end, we first introduce a result from~\cite{sidon, kohayakawa11*} about  lower bounds on the maximum size $F([n]_p)$  of Sidon sets in a random set $[n]_p=[n]^1_p$. Then, we define a bijection $\phi_d$ from $[n^d]$ to $[n]^d$, which was given by Cilleruelo~\cite{Cilleruelo2010}, such that a Sidon set in $[n^d]$ is mapped to a Sidon set in $[n]^d$. Using the bijection $\phi_d$, the lower bounds on $F([n^d]_p)$ in~\cite{sidon} will be converted to the lower bounds on $F([n]^d_p)$ in Theorems~\ref{thm:small}--\ref{thm:large}.

We first introduce the lower bounds on $F([n]_p)$ which were proved in Theorems 2.3--2.7 of~\cite{sidon}.

\begin{lemma}[\cite{sidon}]\label{lem:prob_n} There exist positive absolute constants $c_1$ and $c_2$ such that the following holds a.a.s.:

\begin{enumerate}

\item[(a)] $ F([n]_p) \geq (1+o(1))np $ \hskip 4.8em if \hskip 0.5em $n^{-1}\ll p\ll n^{-2/3}$,

\item[(b)]  $ F([n]_p)\geq \(1/3+o(1)\)np$ \hskip 3.7em if \hskip 0.5em $\displaystyle n^{-1}\ll p\leq2n^{-2/3}$,

\item[(c)]   $
    F([n]_p)\geq c_1 n^{1/3}\Big(\log (n^2p^3)\Big)^{1/3}$ \hskip 1em if \hskip 0.5em $2 n^{-2/3}\leq p\leq n^{-1/3}(\log n)^{2/3}$,

 \item[(d)] 
 $F([n]_p) \geq c_2\sqrt{np}$    \hskip 7.3em if \hskip 0.5em $n^{-1/3}(\log n)^{2/3}\leq p\leq 1.$
\end{enumerate}
%Moreover, inequalities (c) and (d)  hold with probability $1-O(1/n^2).$
\end{lemma}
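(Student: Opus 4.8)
Lemma~\ref{lem:prob_n} is quoted from Theorems~2.3--2.7 of~\cite{sidon}, so strictly one may cite it; here is the plan I would follow to prove it. The four ranges of $p$ call for three techniques. Throughout, call an ordered tuple $(a,b,c,d)$ an \emph{additive quadruple} if $a+b=c+d$ and $\{a,b\}\ne\{c,d\}$; a set of integers becomes a Sidon set once one vertex is removed from each additive quadruple it contains. Since there are $\Theta(n^3)$ additive quadruples with entries in $[n]$, of which $\Theta(n^2)$ are degenerate (have a repeated entry), the expected number of them inside $[n]_p$ is $\Theta(n^3p^4+n^2p^3)$, while $\bigl|[n]_p\bigr|=(1+o(1))np$ a.a.s.\ by Chernoff's bound (Lemma~\ref{lem:Chernoff}). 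For part (a), where $p\ll n^{-2/3}$, the expectation above is $o(np)$, so a.a.s.\ deleting one vertex from each additive quadruple of $[n]_p$ leaves a Sidon set of size $(1+o(1))np$. For part (b), where $p\le 2n^{-2/3}$, the expectation is only $O(np)$, comparable to $\bigl|[n]_p\bigr|$, so I would first retain each element of $[n]_p$ independently with a suitable constant probability $q\le 1$, chosen so that the expected number of surviving additive quadruples drops below, say, a third of the expected surviving size, and then delete one vertex per surviving quadruple; together with Markov's and Chernoff's inequalities and an optimization over $q$ this gives a Sidon set of size $(1/3+o(1))np$.

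The substantive cases are (c) and (d). In range (c), $2n^{-2/3}\le p\le n^{-1/3}(\log n)^{2/3}$, a single ``sparsify-and-delete'' round as in (b) only yields a Sidon set of size $\Theta(n^{1/3})$, whereas the claimed bound $c_1n^{1/3}\bigl(\log(n^2p^3)\bigr)^{1/3}$ is larger by a factor of up to $(\log n)^{1/3}$. To gain this factor I would use a semi-random (R\"odl nibble-type) construction, assembling the Sidon set inside $[n]_p$ in $\Theta\bigl(\log(n^2p^3)\bigr)$ rounds with geometrically decreasing parameters, arranged so that only an $O\bigl(1/\log(n^2p^3)\bigr)$-fraction of the still-available elements is lost in each round to newly created additive quadruples. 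Making the rounds chain together requires tracking, and proving concentration of, the number of additive quadruples through a typical surviving pair throughout the process, and this --- squeezing out the logarithmic factor --- is the step I expect to be the main obstacle.

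In range (d), $n^{-1/3}(\log n)^{2/3}\le p\le 1$, note first that if $p$ is bounded below by a positive constant it suffices to intersect a near-optimal Sidon set in $[n]$ of size $(1-o(1))\sqrt n$ (Singer / Erd\H{o}s--Tur\'an, cf.~\eqref{eq:F}) with $[n]_p$: the intersection is automatically Sidon and has size $\Theta(\sqrt{n}\,p)=\Theta(\sqrt{np})$ by Chernoff. For $p=o(1)$ a single such copy is too small, and I would follow~\cite{sidon}, combining an explicit near-optimal Sidon set with a probabilistic selection/repair argument (in the same spirit as the nibble in (c), but calibrated to reach size $\Theta(\sqrt{np})$ and using that above the threshold $p\gg n^{-1/3}$ the additive-quadruple hypergraph on $[n]_p$ is not too locally dense). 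The lower-order estimates in all four parts are routine applications of Markov's and Chernoff's inequalities.
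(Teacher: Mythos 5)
This lemma is not proved in the paper at all: it is imported verbatim from Theorems 2.3--2.7 of \cite{sidon} (the lemma is even labelled with that citation, and the surrounding text says the bounds ``were proved in'' that reference). So your opening move --- cite \cite{sidon} --- is exactly what the paper does, and for the purposes of this paper that is the whole proof. Judged as a standalone proof, however, your sketch only fully covers the easy cases. Parts (a) and (b) by deletion are fine; in fact the subsampling detour in (b) is unnecessary and costs you the constant: the expected number of nondegenerate additive quadruples is $(1/12+o(1))n^3p^4$, which at the extreme point $p=2n^{-2/3}$ equals $(2/3+o(1))np$, so deleting one element per quadruple directly leaves $(1/3+o(1))np$, whereas optimizing your retention probability $q$ yields a strictly smaller constant.

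The two substantive cases are left open. For (c), you correctly identify the logarithmic gain as the crux and then defer it; in \cite{sidon,kohayakawa11*} this is not done by running a nibble from scratch but by invoking the known theorem on independent sets in uncrowded (small-codegree) hypergraphs, applied to the hypergraph of additive quadruples (and degenerate triples) induced on $[n]_p$; the work then reduces to second-moment estimates showing that a.a.s.\ this hypergraph has the right edge count and few $2$-cycles, after which the $(\log(n^2p^3))^{1/3}$ factor comes out of the black box. For (d) with $p=o(1)$ your description (``probabilistic selection/repair'') misses the actual mechanism, which is an explicit rescaled construction rather than a repair argument: choose $t=\Theta(1/p)$ and a Singer Sidon set $A$ of size $\Theta(\sqrt{np})$ whose elements, after dilation by $5t$, index pairwise disjoint intervals of length $t$ in $[n]$ with all pairwise sums of intervals separated by more than the accumulated perturbation; each interval meets $[n]_p$ with probability bounded away from $0$, and picking one point of $[n]_p$ from each hit interval yields a Sidon set of size $\Omega(\sqrt{np})$ by Chernoff. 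Without supplying these two arguments (or the citation), the proposal does not establish (c) or (d).
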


Let $\phi_d: [n^d] \rightarrow [n]^d$ be the bijection defined by $\phi_d(a)=(a_0,\cdots, a_{d-1})$ where
$$ a=a_0+a_1n+a_2n^2+\cdots +a_{d-1}n^{d-1}.   $$
Cilleruelo~\cite{Cilleruelo2010} showed the following property of the bijection $\phi_d$. 

\begin{property}\label{prop:phi} If $A$ is a Sidon set in $[n^d]$, then $\phi_d(A)$ is a Sidon set in $[n]^d$. \end{property}

For a proof of Property~\ref{prop:phi}, see Theorem~5 and its proof in~\cite{Cilleruelo2010}.

 Now we are ready to show the following lower bounds on $F\([n]^d_p\)$ which easily imply the lower bounds in  Theorems~\ref{thm:small}--\ref{thm:large}.

\begin{lemma}\label{lem:lower bound}
 There exist positive absolute constants $c_1$ and $c_2$ such that the following holds a.a.s.:
\begin{enumerate}

\item[(a)] $ F\([n]^d_p\)\geq (1+o(1))n^dp $ \hskip 3.7em if \hskip 0.5em $n^{-d}\ll p\ll n^{-2d/3}$

\item[(b)]  $ F\([n]^d_p\)\geq \(1/3+o(1)\)n^dp$ \hskip 2.6em if \hskip 0.5em $ \displaystyle n^{-d}\ll p\leq2n^{-2d/3}$

 \item[(c)]    $
    F\([n]^d_p\)\geq c_1 n^{d/3}\Big(\log (n^{2d}p^3)\Big)^{1/3}$ \hskip 0em if \hskip 0.5em $2 n^{-2d/3}\leq p\leq d^{2/3}n^{-d/3}\(\log n\)^{2/3}$
 
 \item[(d)] 
 $F\([n]^d_p\) \geq c_2n^{d/2}p^{1/2}$    \hskip 5.2em if \hskip 0.5em $d^{2/3}n^{-d/3}\(\log n\)^{2/3}\leq p\leq 1.$
\end{enumerate}
%Moreover, (iii) and (iv)  hold with probability $1-O(1/n^2).$
\end{lemma}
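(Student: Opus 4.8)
The plan is to derive Lemma~\ref{lem:lower bound} from Lemma~\ref{lem:prob_n} by transporting the one-dimensional results to $[n]^d$ via the bijection $\phi_d$. The key observation is that $\phi_d$ maps $[n^d]$ bijectively onto $[n]^d$, and by Property~\ref{prop:phi} it sends Sidon sets to Sidon sets. Hence, if we write $N=n^d$ and couple the random sets, then $\phi_d$ maps $[N]_p$ onto a set with the same distribution as $[n]^d_p$: indeed, since $\phi_d$ is a bijection, selecting each element of $[N]$ independently with probability $p$ and then applying $\phi_d$ is the same as selecting each element of $[n]^d$ independently with probability $p$. Under this coupling, every Sidon set $A\subseteq [N]_p$ yields a Sidon set $\phi_d(A)\subseteq [n]^d_p$ of the same size, so
\begin{equation*}
F\([n]^d_p\)\geq F\([N]_p\)=F\([n^d]_p\).
\end{equation*}

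The next step is to substitute $N=n^d$ into each of the four regimes of Lemma~\ref{lem:prob_n} and check that the stated hypotheses on $p$ in Lemma~\ref{lem:lower bound} translate correctly. For part~(a), the condition $n^{-d}\ll p\ll n^{-2d/3}$ is exactly $N^{-1}\ll p\ll N^{-2/3}$, so Lemma~\ref{lem:prob_n}(a) gives a.a.s. $F([N]_p)\geq(1+o(1))Np=(1+o(1))n^dp$. Part~(b) is identical with the weaker conclusion. For part~(c), we need $2N^{-2/3}\leq p\leq N^{-1/3}(\log N)^{2/3}$; the left inequality is $2n^{-2d/3}\leq p$, and since $\log N=\log(n^d)=d\log n$, the right inequality becomes $p\leq n^{-d/3}(d\log n)^{2/3}=d^{2/3}n^{-d/3}(\log n)^{2/3}$, matching the stated range. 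Moreover $\log(N^2p^3)=\log(n^{2d}p^3)$, so Lemma~\ref{lem:prob_n}(c) gives a.a.s. $F([N]_p)\geq c_1 N^{1/3}(\log(N^2p^3))^{1/3}=c_1 n^{d/3}(\log(n^{2d}p^3))^{1/3}$. For part~(d), the condition $N^{-1/3}(\log N)^{2/3}\leq p\leq 1$ again becomes $d^{2/3}n^{-d/3}(\log n)^{2/3}\leq p\leq 1$, and the conclusion $F([N]_p)\geq c_2\sqrt{Np}$ reads $c_2 n^{d/2}p^{1/2}$, as desired.

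Finally, I would note that Lemma~\ref{lem:lower bound} immediately yields the lower bounds in Theorems~\ref{thm:small}--\ref{thm:large}: parts~(a) and~(b) are precisely the lower bounds in Theorem~\ref{thm:small}; part~(c) gives the lower bound in Theorem~\ref{thm:middle} on the range $2n^{-2d/3}\leq p\leq n^{-d/3-\epsilon}$, and by monotonicity of $F([n]^d_p)$ in $p$ also the lower bounds in Theorems~\ref{thm:middle_large} (using that $c_1 n^{d/3}(\log(n^{2d}p^3))^{1/3}\geq c_3 n^{d/3}(\log n)^{1/3}$ on the relevant range, where one absorbs the constant into $c_3$) and~\ref{thm:large} for the sub-range $p\leq d^{2/3}n^{-d/3}(\log n)^{2/3}$; and part~(d) gives the lower bound in Theorem~\ref{thm:large} for $p\geq d^{2/3}n^{-d/3}(\log n)^{2/3}$, which combined with the overlap covers all of $p\geq n^{-d/3}(\log n)^{8/3}$.

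I do not expect a serious obstacle here: the proof is essentially bookkeeping once the coupling via $\phi_d$ is set up, since all the probabilistic content is already packaged in Lemma~\ref{lem:prob_n} from~\cite{sidon}. The only point requiring a little care is the logarithmic factor $\log N=d\log n$, which shifts the constants in the boundary conditions on $p$ by a factor of $d^{2/3}$ (hence the $d^{2/3}$ appearing in the range of parts~(c) and~(d)), and keeping track of where the various small $\epsilon$'s and logarithmic powers in Theorems~\ref{thm:middle_large} and~\ref{thm:large} require invoking monotonicity rather than a direct substitution.
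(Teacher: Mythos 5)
Your proposal is correct and follows exactly the paper's argument: couple $[n^d]_p$ with $[n]^d_p$ via Cilleruelo's bijection $\phi_d$, use Property~\ref{prop:phi} to get $F([n^d]_p)\leq F\([n]^d_p\)$, and then apply Lemma~\ref{lem:prob_n} with $n^d$ in place of $n$, with the same bookkeeping $\log(n^d)=d\log n$ producing the $d^{2/3}$ factors in the ranges of parts~(c) and~(d).
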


\begin{proof}
Recall the bijection $\phi_d$ introduced just before Property~\ref{prop:phi}.
By the bijection $\phi_d$, a random set $[n^d]_p$ is mapped to $\phi_d\([n^d]_p\)$. Since $\phi_d\([n^d]_p\)$ is stochastically identical to $[n]^d_p$, we have that 
$[n^d]_p$ is stochastically identical to $[n]^d_p$.
Property~\ref{prop:phi} implies that  if $A\subset [n^d]_p$ is a Sidon set, then $\phi_d(A)\subset \phi_d([n^d]_p)=[n]^d_p$ is a Sidon set. Hence we infer that
  \begin{equation}\label{eq:lower bound} F([n^d]_p) \leq F\([n]^d_p\). \end{equation}

Therefore, in order to obtain a lower bound on $F\([n]^d_p\)$, one can use a lower bound on $F([n^d]_p)$. 
By Lemma~\ref{lem:prob_n} with $n^d$ instead of $n$, we obtain the following lower bounds on $F([n^d]_p)$: There exist absolute constants $c_1$ and $c_2$ such that the following holds a.a.s.:
\begin{enumerate}

\item[(a)] $ F([n^d]_p) \geq (1+o(1))n^dp $ \hskip 3.7em \textit{if} \hskip 0.5em $n^{-d}\ll p\ll n^{-2d/3}$

\item[(b)]  $ F([n^d]_p)\geq \(1/3+o(1)\)n^dp$ \hskip 2.5em \textit{if} \hskip 0.5em $\displaystyle n^{-d}\ll  p\leq2n^{-2d/3}$

\item[(c)]   $
    F([n^d]_p)\geq c_1 n^{d/3}\Big(\log (n^{2d}p^3)\Big)^{1/3}$ \hskip -0.2em \textit{if} \hskip 0.5em $2 n^{-2d/3}\leq p\leq n^{-d/3}\(\log (n^d)\)^{2/3}$
 
 \item[(d)] 
 $F([n^d]_p) \geq c_2\sqrt{n^dp}$    \hskip 6.1em \textit{if} \hskip 0.5em $n^{-d/3}\(\log (n^d)\)^{2/3}\leq p\leq 1.$
\end{enumerate}

\noindent Combining  inequality~\eqref{eq:lower bound} and the above (a)--(d) implies Lemma~\ref{lem:lower bound}.
%%%%%%%%%%%%
\begin{comment}

Let $N=n^d$. After replacing $n$ with $n^d$, Lemma~\ref{lem:prob_n} yields that for every $\delta>0$ there exist constants $c_1$, $c_3=c_3(\delta)$, and $c_5$ such that the following holds a.a.s.:

\begin{enumerate}
\item $(1+o(1))n^dp \leq F([n^d]_p)$ \hskip 2em if $(n^d)^{-1}\ll p\ll (n^d)^{-2/3}$

\item $\({1\over3}+o(1)\)n^dp\leq F([n^d]_p)$ \hskip 2em if $\delta (n^d)^{-2/3}\ll p\leq 2(n^d)^{-2/3}$

\item  $\displaystyle c_1\big(n^d\log ((n^d)^2p^3)\big)^{1/3}\leq
    F([n^d]_p)$ \hskip 2em if $2 (n^d)^{-2/3}\leq p\leq (n^d)^{-1/3-\delta}$

\item $ c_3\(n^d\log (n^d)\)^{1/3}\leq
    F([n^d]_p) \hskip 3em  \mbox{ if } (n^d)^{-2\delta}\leq\alpha\leq 1  \mbox{ and } p=\alpha (n^d)^{-1/3}\(\log (n^d)\)^{2/3} $

\item $ c_5\sqrt{n^dp}\leq F([n^d]_p)  \hskip 3em
\mbox{ if } \(\log (n^d)\)^{-2}\leq \alpha  \leq 1 \mbox{
  and } p = \alpha (n^d)^{-1/3}\(\log (n^d)\)^{8/3}$
 
 \item
 $\displaystyle c_5\sqrt{n^dp} \leq F([n^d]_p)$  \hskip 2em if~$(n^d)^{-1/3}\(\log (n^d)\)^{8/3}\leq p\leq 1$
\end{enumerate}

This completes the proof of Lemma~\ref{lem:lower bound}.
\end{comment}
%%%%%%%%%%%%%%%%
\end{proof}

\begin{acknowledge}
  The author thanks
 Mark Siggers for helpful comments and corrections, and thanks Domingos Dellamonica Jr. for discussion yielding the improvement in Lemma~\ref{lem:Z_s+q+r(2)}. 
\end{acknowledge}

%\def\MR#1{}
%\bibliographystyle{amsplain_yk}
%\bibliography{references}

%\endgroup

\providecommand{\bysame}{\leavevmode\hbox to3em{\hrulefill}\thinspace}
\providecommand{\MR}{\relax\ifhmode\unskip\space\fi MR }
% \MRhref is called by the amsart/book/proc definition of \MR.
\providecommand{\MRhref}[2]{%
  \href{http://www.ams.org/mathscinet-getitem?mr=#1}{#2}
}
\providecommand{\href}[2]{#2}
\def\MR#1{\relax}

\end{document}